\documentclass[final,12pt]{elsarticle}
\usepackage{amsmath}
\usepackage{amsthm}
\usepackage{amssymb}
\usepackage{mathtools}
\usepackage{array}
\usepackage{booktabs}
\usepackage{bm}
\usepackage{tikz}
\usepackage{xcolor}
\usepackage{verbatim}
\usepackage{appendix}
\usepackage{enumerate}
\usepackage{enumitem}
\usepackage{xcolor}

\setlist[enumerate,1]{label=(\arabic*),font=\textup,
leftmargin=7mm,labelsep=1.5mm,topsep=0mm,itemsep=-0.8mm}
\setlist[enumerate,2]{label=(\alph*),font=\textup,
leftmargin=7mm,labelsep=1.5mm,topsep=-0.8mm,itemsep=-0.8mm}

\usepackage{geometry}
\usepackage{microtype}

\usepackage[pdfstartview=FitH,bookmarksopen=false,
colorlinks=true,linkcolor=blue,citecolor=blue]{hyperref}

\usepackage{graphicx}
\usepackage{floatrow}
\usepackage{natbib}
\newtheorem{theorem}{Theorem}[section]

\newtheorem{lemma}{Lemma}[section]

\numberwithin{equation}{section}
\newenvironment{proof1}{\noindent{\textbf{Proof.}}\ }{\hfill $\square$\par}
\newenvironment{proof of 1.1}{\noindent{\textbf{Proof of Theorem 1.1.}}\ }{\hfill $\square$\par}
\newenvironment{proof of 2.1}{\noindent{\textbf{Proof of Lemma 2.4.}}\ }{\hfill $\square$\par}
\newenvironment{proof of 2.2}{\noindent{\textbf{Proof of Lemma 2.5.}}\ }{\hfill $\square$\par}

\geometry{left=2.5cm,right=2.5cm,top=2cm,bottom=2cm}

\usepackage{indentfirst}
\setlength{\parindent}{2em}

\begin{document}

\begin{frontmatter}
		\title{The maximum index of signed complete graphs whose negative edges induce a bicyclic graph\,}
		\author{Ziyi Fang$^{1,2}$}
        \author{Fan Chen$^{1,2}$}
		\author{Xiying Yuan$^{1,2}$\corref{corespond}}
		\cortext[corespond]{Corresponding author. \\
Email address: ziyi\_sunny@shu.edu.cn (Ziyi Fang), chenfan@shu.edu.cn (Fan Chen), \\xiyingyuan@shu.edu.cn (Xiying Yuan)\\ This work is supported by the National Natural Science Foundation of China (Nos. 11871040,
12271337, 12371347).}
		\address{$^1$Department of Mathematics, Shanghai University, Shanghai 200444, P.R. China}
		\address{$^2$Newtouch Center for Mathematics of Shanghai University, Shanghai 200444, P.R. China}
				
		\begin{abstract}
		 Let $\Gamma=(K_n,H)$ be a signed complete graph whose negative edges induce a subgraph $H$. Let $A(\Gamma)$ be the adjacency matrix of the signed graph $\Gamma$. The largest eigenvalue of $A(\Gamma)$ is called the index of $\Gamma$.  In this paper, the index of all the signed complete graphs whose negative edges induce a bicyclic graph $B$ is investigated. Specifically, the structure of the bicyclic graph $B$ such that $\Gamma=(K_n,B)$ has the maximum index is determined.
		\end{abstract}
		
		\begin{keyword}
			Signed complete graph\sep
			Index \sep
			Bicyclic graph.\\
AMS Classification: 05C50
		\end{keyword}

\end{frontmatter}

\section{Introduction}
A signed graph $\Gamma=(G,\sigma)$ consists of an underlying graph $G=(V(G),E(G))$ and a signature $\sigma:E(G)\rightarrow\{1,-1\}$, where $G$ is a graph with the vertex set $V(G)$ and the edge set $E(G)$, and $\sigma$ is a mapping defined on the edge set of $G$.
Signed graphs were first introduced by Harary \cite{Harary1953On} and by Cartwright and Harary \cite{1956Structuralbalance}.
The adjacency matrix of the signed graph $\Gamma$ is denoted as  $A(\Gamma)=(a_{ij}^{\sigma})$, where $a_{ij}^{\sigma}=\sigma(v_iv_j)a_{ij}$, if $v_i\sim v_j$, and 0 otherwise. The characteristic polynomial of $A(\Gamma)$ is denoted by $\varphi(\Gamma,\lambda)$. By the eigenvalues, spectrum, and eigenvectors of $\Gamma$, we refer to those of $A(\Gamma)$. Let $\lambda_1(\Gamma)\geqslant\cdots\geqslant\lambda_n(\Gamma)$ be the eigenvalues of $\Gamma$. In particular, the largest eigenvalue $\lambda_1(\Gamma)$ is called the index. The spectrum of graphs and signed graphs has been studied in recent years, as seen in works such as \cite{Akbari2019On,2020OnAkbari,Belardo2016On}.

Let $E_v(G)=\{e|v\in e\in E(G)\}$. The degree of $v$ in $G$ is defined as $|E_v(G)|$. A vertex with degree 1 is referred to as a pendant vertex.
A connected signed graph $\Gamma$ is called $t-$cyclic if its underlying graph $G$ is $t-$cyclic with $|E(G)|=|V(G)|+t-1$. In particular, we designate a 1-cyclic signed graph $\Gamma$ as a signed unicyclic graph and a 2-cyclic signed graph $\Gamma$ as a signed bicyclic graph \cite{2018nongolden}. For more details about the notion of signed graphs, we refer to \cite{1982Signedgraphs}.

In \cite{Koledin2017Connected}, Koledin and Stani\'{c} studied the connected signed graphs of fixed order, size, and number of negative edges. They conjectured that if $\Gamma$ is a signed complete graph with $n$ vertices and $k$ negative edges, where $k<n-1$, then $\Gamma$ attains maximum index if and only if the negative edges induce a star $K_{1,k}$.
Akbari, Dalvandi, Heydari, and Maghasedi \cite{2019SignedAkbari} proved that the conjecture holds for signed complete graphs in which the negative edges form a tree.  Ghorbani and Majidi \cite{2021SignedEbrahim} completely confirmed the conjecture.

Let $d(u,v)$ be the length of the shortest path between vertex $u$ and vertex $v$. The diameter of a graph $G$ is defined as the maximum of $d(u,v)$ for any  $u,v\in V(G)$. Recently, Li, Lin, and Meng \cite{2023ExtremalLiDan} identified the signed graph with the maximum index and the smallest minimum eigenvalue among the signed complete graphs whose negative edges induce a tree of order $n$ and have a diameter of at least $3$.
In \cite{2024minimumeigenvalue}, Ghorbani and Majidi investigated the unique signed graph among signed complete graphs whose negative edges induce a tree $T$, where $T$ is defined as a tree with a diameter of at least $d$ for any given $d$. They also determined the smallest minimum eigenvalue of signed complete graphs with $n$ vertices and $k$ negative edges, where the negative edges induce a tree. In \cite{2021uncycleNavidKafai}, Kafai, Heydari, Red, and Maghasedi demonstrated that among all signed completed graphs of order $n>5$ whose negative edges induce a unicyclic graph of order $k$ and maximizes the index, the negative edges induce a triangle with all remaining vertices being pendant at the same vertex of the triangle. A positive cycle is defined as a cycle in a signed graph that contains an even number of negative edges, while a negative cycle is not positive. A signed graph is balanced if it contains no negative cycles; otherwise, it is unbalanced. Recently, Li, Lin, and Teng \cite{2024LiDanLinHUIQIu} investigated the maximum index of unbalanced signed complete graphs whose negative edges induce a tree of order $n$ with a given number of pendant vertices. In \cite{2025LaplacianLiDan}, Li, Yan, and Meng examined the least Laplacian eigenvalue of $(K_n,F)$, where $F$ is a unicyclic graph.

We denote the complete graph of order $n$ by $K_n$. In \cite{2019SignedAkbari}, $(K_n,H)$ denotes a signed complete graph of order $n$ whose negative edges induce the (unsigned) graph $H$.
In this paper, we consider the signed complete graphs whose negative edges induce a bicyclic graph $B$ and investigate the structure of the bicyclic graph $B$ to ensure that $(K_n,B)$ has the maximum index.

\begin{figure}[H]
\centering
\includegraphics[scale=0.7]{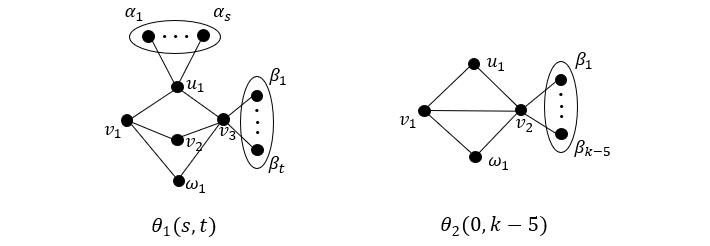}
\caption{Graphs $\theta_1(s,t)$ and $\theta_2(0,k-5)$.}
\label{fig:B11}
\end{figure}

We define the bicyclic graphs $\theta_1(s,t)$ and $\theta_2(0,k-5)$ with $k$ edges in Figure \ref{fig:B11}, where $s+t=k-6$.
Then we establish the following result.
\begin{theorem}\label{thm:maximumindexall}
Let $B$ be a bicyclic graph with $k$ edges. The signed complete graph $(K_n,B)$ attains the maximum index if and only if $B$ is isomorphic to $\theta_2(0,k-5)$ $($see Fig.1$)$ when $n\geqslant k+20$ and $k\geqslant 15$.
\end{theorem}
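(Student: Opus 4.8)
The plan is to work throughout with the identity $A(\Gamma)=J-I-2A(B)$, where $J$ is the all–ones matrix and $A(B)$ is the adjacency matrix of $B$ regarded as a graph on the full vertex set $V(K_n)$, so that the $n-k+1$ vertices lying outside $B$ are isolated. Since $n\geqslant k+20$ the all–ones part dominates, so I would first show that the index $\rho=\lambda_1(\Gamma)$ exceeds $n-1-c$ for a constant depending only on $k$, and that the corresponding unit eigenvector $x$ may be taken entrywise positive. Writing $s=\mathbf 1^{\mathsf T}x$, the eigenvalue equation at a vertex $i$ reads $(\rho+1)x_i=s-2\sum_{j\sim_B i}x_j$, from which two facts follow at once: every vertex outside $B$ carries the \emph{same} value $x_i=s/(\rho+1)=:c$, while every vertex of $B$ carries a strictly smaller value. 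This reduces $\Gamma$ to an equitable-type structure governed by $B$ together with the single class of outside vertices, and it expresses the index through the Rayleigh quotient $x^{\mathsf T}A(\Gamma)x=s^{2}-\|x\|^{2}-4\sum_{ij\in E(B)}x_ix_j$; maximizing $\rho$ is therefore the same as driving the weighted edge sum $\sum_{ij\in E(B)}x_ix_j$ as small as possible over bicyclic $B$.

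The core of the argument is a sequence of local switching (edge–relocation) lemmas that transform an arbitrary bicyclic $B$ into the claimed extremal shape without decreasing the index. First I would prove that relocating the attachment of a pendant edge from a vertex of larger eigenvector value to one of smaller value strictly decreases $\sum_{ij\in E(B)}x_ix_j$; feeding the old eigenvector $x$ back into the Rayleigh quotient of the new graph then yields $\lambda_1(K_n,B')>\lambda_1(K_n,B)$. Iterating collapses every attached tree to a set of pendant edges hung at a single vertex of largest degree, in the spirit of the unicyclic extremal graph of Kafai et al.\ \cite{2021uncycleNavidKafai}. Next I would show, by a cycle–shortening move that replaces a long cycle by a triangle and re-hangs the released vertices as pendants, that the cyclic part of $B$ is forced to be as short as possible. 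What survives these reductions are exactly the two families $\theta_1(s,t)$ (with $s+t=k-6$) and $\theta_2(0,k-5)$ of Figure \ref{fig:B11}, so the extremal $B$ must be one of them.

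For the final comparison I would compute $\varphi((K_n,B),\lambda)$ in closed form. With $\mu=\lambda+1$ and $M=\mu I+2A(B)$ one checks $\lambda I-A(\Gamma)=M-\mathbf 1\mathbf 1^{\mathsf T}$, so the matrix determinant lemma gives
\begin{equation*}
\varphi((K_n,B),\lambda)=\det(M)\bigl(1-\mathbf 1^{\mathsf T}M^{-1}\mathbf 1\bigr).
\end{equation*}
Because $B$ occupies only $k-1$ vertices, both factors split off a power $\mu^{\,n-k+1}$ and depend otherwise only on data intrinsic to $B$ — its characteristic polynomial and the walk generating function $\mathbf 1^{\mathsf T}M^{-1}\mathbf 1$. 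Specializing this to the finitely many reduced candidates turns the problem into comparing the largest roots of two explicit polynomials in $\lambda$ whose coefficients are polynomials in $n$ and $k$; a sign analysis of their difference, valid once $n\geqslant k+20$ and $k\geqslant 15$, should yield the strict inequality $\lambda_1(K_n,\theta_2(0,k-5))>\lambda_1(K_n,\theta_1(s,t))$, completing the proof.

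I expect the main obstacle to be the structural reduction rather than the final computation. Because the eigenvector entries on the bicyclic core can be small or even negative, the naive ``move each edge toward the smaller-value vertex'' heuristic is not automatically monotone; making each switching lemma rigorous requires carefully tracking how $\rho$, $s$, and the core entries co-vary under a move and ruling out the degenerate configurations (short cycles, shared vertices or edges, overlapping pendant attachments) that a bicyclic graph can assume. Establishing the cycle-shortening step uniformly across all theta- and dumbbell-type cores, with the threshold $n\geqslant k+20$ guaranteeing positivity of $x$ throughout, is where the real work lies.
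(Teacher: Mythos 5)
Your plan founders at its very first step: the claim that the leading eigenvector of $(K_n,B)$ ``may be taken entrywise positive.'' The matrix $A(\Gamma)=J-I-2A(B)$ is not nonnegative and $\Gamma$ is unbalanced, so Perron--Frobenius does not apply; worse, positivity genuinely fails in the regime the theorem covers. The hypothesis is $n\geqslant k+20$, which allows $k$ to be comparable to $n$, and in $\theta_2(0,k-5)$ the central vertex $v_2$ has degree $k-2$ in $B$; the eigenvalue equation $(\rho+1)x_{v_2}=s-2\sum_{j\sim_B v_2}x_j$ with $\rho\approx n-1$ and the neighbours carrying roughly $s/(\rho+1)$ forces $x_{v_2}\approx\bigl(1-2(k-2)/(\rho+1)\bigr)s/(\rho+1)$, which is negative once $k>n/2$ or so. Since every one of your switching lemmas is monotone only under sign hypotheses on the eigenvector entries (moving a pendant $p$ from $v$ to $w$ changes $\sum_{ij\in E(B)}x_ix_j$ by $x_p(x_w-x_v)$, whose sign depends on the sign of $x_p$), the entire structural reduction is left unproved --- and you say as much in your final paragraph. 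The paper's proof lives precisely in this gap: it never assumes positivity, instead invoking the Koledin--Stani\'c perturbation lemma (reverse one positive and one negative edge at a common vertex $r$ under the condition $x_r\geqslant0,\ x_s\geqslant x_t$ or its negation) through a long, carefully ordered case analysis (Claims 1--8), supported by a separate lemma showing the eigenvector cannot vanish identically on either cycle.

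Two of your structural conclusions are also wrong as stated. The extremal candidates are \emph{not} obtained by shortening every cycle to a triangle and hanging all pendants at a single vertex: the paper proves only $l_1\leqslant l_2\leqslant 4$, and the family $\theta_1(s,t)$ --- whose base $\hat\theta(2,2,2)$ consists of two $4$-cycles with pendants attached at \emph{two} distinct vertices --- survives every switching argument and can only be eliminated by the explicit characteristic-polynomial comparison of Lemmas 2.4 and 2.5. A cycle-shortening move provable by switching alone does not exist here. Your closing computation via the matrix determinant lemma, $\varphi(\Gamma,\lambda)=\det(M)\bigl(1-\mathbf 1^{\mathsf T}M^{-1}\mathbf 1\bigr)$ with $M=(\lambda+1)I+2A(B)$, is a perfectly serviceable alternative to the paper's equitable quotient matrices for that final finite comparison, but it is the only part of the proposal that stands on its own.
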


Let $\mathcal{G}=\{(K_n,\theta_1(s,t)),(K_n,\theta_2(0,k-5))\}$, where $s+t=k-6$. The graphs $\theta_1(s,t)$ and $\theta_2(0,k-5)$ are depicted in Figure \ref{fig:B11}. We will focus on the maximum index of signed complete graphs in $\mathcal{G}$ in Section \ref{Sec:mathcalG}. Additionally, we will give the proof of Theorem \ref{thm:maximumindexall} in Section \ref{Sec:maxindex}.

\section{The maximum index of signed graphs in $\mathcal{G}$}\label{Sec:mathcalG}

Let $M$ be a symmetric matrix of order $n$, and $\{X_1,\cdots,X_m\}$ be a partition of $\{1,\cdots,n\}$. We have the following block form
\begin{equation*}
\begin{split}
M=\left(
\begin{array}{ccc}
 M_{11} & \cdots & M_{1m}\\
 \vdots & \ddots & \vdots\\
 M_{m1} & \cdots & M_{mm}\\
\end{array}
\right).
\end{split}
\end{equation*}
The matrix $Q=(q_{ij})$ is called the quotient matrix of $M$, where $q_{ij}$ denotes the average row sum of $M_{ij}$, for $1\leqslant i,j\leqslant m$. Moreover, a partition is called equitable if for each pair $i,j$, the row sum of $M_{ij}$ remains constant. The matrix $Q$ is an equitable quotient matrix when the partition is equitable.

Let $\eta=\{X_1,\cdots,X_p,X_{p+1},\cdots,X_{p+q}\}$ be a special partition of the vertices of the signed completed graph $\Gamma$, where $|X_i|=n_i$ and $\Gamma[X_i]=(K_{n_i},+)$ for $i=1,\cdots,p$, $\Gamma[X_i]=(K_{n_i},-)$ for $i=p+1,\cdots,p+q$, and
all the edges have the same sign between $X_i$ and $X_j$ for each $i,j$. Obviously, the special partition $\eta$ is equitable.

\begin{lemma}\label{Lem:partitionquotient}\textup{(\cite{2021uncycleNavidKafai})} Let $\Gamma$ be a signed complete graph, $\eta=\{X_1,\cdots,X_p,X_{p+1},\cdots,X_{p+q}\}$ be a special partition of $V(\Gamma)$. Let $Q$ be the quotient matrix of $A(\Gamma)$ related to $\eta$. Then the characteristic polynomial of $A(\Gamma)$ is
\begin{equation*}
\begin{split}
\varphi(\Gamma,\lambda)=(\lambda+1)^{\sum_{i=1}^{p}n_i-p}(\lambda-1)^{\sum_{i=p+1}^{p+q}n_i-q}\varphi(Q,\lambda),
\end{split}
\end{equation*}
where $n_i=|X_i|$.
\end{lemma}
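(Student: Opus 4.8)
The plan is to exploit the $A(\Gamma)$-invariant orthogonal decomposition of $\mathbb{R}^n$ induced by the equitable partition $\eta$, so that the spectrum splits cleanly into the part ``seen'' by the quotient matrix $Q$ and the part living inside the blocks. First I would record the block structure of $A(\Gamma)$ forced by the definition of a special partition: ordering the vertices block by block, the diagonal block on $X_i$ equals $J_{n_i}-I_{n_i}$ for $i\leqslant p$ (a positive clique) and $I_{n_i}-J_{n_i}$ for $i>p$ (a negative clique), while every off-diagonal block is a constant-sign all-ones matrix $A_{ij}=\epsilon_{ij}J_{n_i\times n_j}$ with $\epsilon_{ij}\in\{+1,-1\}$. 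The decisive structural feature is that \emph{every} block of $A(\Gamma)$ is a linear combination of an all-ones matrix and an identity matrix.

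Next I would introduce the subspace $\mathcal{U}\subseteq\mathbb{R}^n$ of vectors that are constant on each block $X_i$ (dimension $m=p+q$), together with its orthogonal complement $\mathcal{W}$ of vectors summing to zero on each block (dimension $n-m$). Writing $S$ for the $n\times m$ characteristic (indicator) matrix of $\eta$, equitability gives the identity $A(\Gamma)\,S=S\,Q$, which shows that $\mathcal{U}=\operatorname{col}(S)$ is $A(\Gamma)$-invariant and that the restriction of $A(\Gamma)$ to $\mathcal{U}$ is represented by $Q$ in the basis of columns of $S$; hence this piece contributes exactly the factor $\varphi(Q,\lambda)$. Since $A(\Gamma)$ is symmetric, $\mathcal{W}$ is $A(\Gamma)$-invariant as well.

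The heart of the argument is the action on $\mathcal{W}$. For $x\in\mathcal{W}$ and any block $X_j$, the subvector $x|_{X_j}$ has zero sum, so $J\,x|_{X_j}=0$. Consequently every off-diagonal contribution $\epsilon_{ij}J_{n_i\times n_j}\,x|_{X_j}$ vanishes, and each diagonal block acts simply by $(J-I)\,x|_{X_i}=-x|_{X_i}$ when $i\leqslant p$ and by $(I-J)\,x|_{X_i}=+x|_{X_i}$ when $i>p$. Thus $A(\Gamma)$ acts as $-I$ on the zero-sum directions inside each positive block and as $+I$ on those inside each negative block. The positive blocks supply $\sum_{i=1}^{p}(n_i-1)=\sum_{i=1}^{p}n_i-p$ eigenvectors with eigenvalue $-1$, the negative blocks supply $\sum_{i=p+1}^{p+q}n_i-q$ eigenvectors with eigenvalue $+1$, and collecting the three invariant pieces yields
\[
\varphi(\Gamma,\lambda)=(\lambda+1)^{\sum_{i=1}^{p}n_i-p}(\lambda-1)^{\sum_{i=p+1}^{p+q}n_i-q}\varphi(Q,\lambda).
\]

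The only genuinely delicate point to check is that these three invariant subspaces together span all of $\mathbb{R}^n$ and that their eigenvalue contributions are counted without overlap; this is immediate here, since $\dim\mathcal{U}+\sum_{i=1}^{p}(n_i-1)+\sum_{i=p+1}^{p+q}(n_i-1)=m+(n-m)=n$, so no multiplicity is lost or double-counted. Everything else reduces to the routine observation that an all-ones matrix annihilates zero-sum vectors, so I expect the verification of the identity $A(\Gamma)S=SQ$ and the dimension bookkeeping to be the main (but entirely mechanical) obstacle.
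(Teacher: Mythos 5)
Your argument is correct and complete. Note that the paper itself gives no proof of this lemma --- it is quoted from the cited reference --- but your decomposition $\mathbb{R}^n=\operatorname{col}(S)\oplus\mathcal{W}$, with $A(\Gamma)S=SQ$ accounting for the factor $\varphi(Q,\lambda)$ and the zero-sum vectors inside each positive (resp.\ negative) clique block giving eigenvalue $-1$ (resp.\ $+1$) with the stated multiplicities, is exactly the standard proof of this kind of statement, and the dimension count $m+(n-m)=n$ confirms nothing is missed or double-counted.
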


To simplify representation, let
\begin{equation*}
\begin{split}
F(\lambda)=&\lambda^7+(7-n)\lambda^6+(21-6n)\lambda^5+(4ku+8st-15n+16k+8s-61)\lambda^4\\
&+(16ku+32st-20n+64k+32s-349)\lambda^3+(-24ku-16st-16stu\\
&-16su+273n-192k+48s-267)\lambda^2+(-80ku-96st-32stu-32su\\
&+570n-512k+32s+199)\lambda-44ku-56st+112stu-16su+287n\\
&-272k+8s+193,
\end{split}
\end{equation*}
where $u=n-s-t-5=n-k+1$. Let $N_{G}(u)$ be the set of neighbours of $u$ in $G$. Then, we have following result.

\begin{lemma}\label{lem:BBB1}
    If $\Gamma=(K_n,\theta_{1}(s,t))$, where $s+t=k-6$. Then $\varphi(\Gamma,\lambda)=(\lambda+1)^{n-7}F(\lambda)$.
\end{lemma}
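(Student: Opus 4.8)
The plan is to invoke Lemma \ref{Lem:partitionquotient} after exhibiting an explicit special partition of $V(K_n)$ adapted to the structure of $\theta_1(s,t)$. Since the target expression carries the factor $(\lambda+1)^{n-7}$ multiplied by a degree-seven polynomial, the partition must have exactly seven cells, each of which is an independent set in the negative-edge graph $B=\theta_1(s,t)$ so that its internal edges are all positive and it induces a positive clique $(K_{n_i},+)$. Concretely, I would group the vertices of $B$ by twins, i.e. by equal neighbourhood in $B$ among mutually non-adjacent vertices: the $s$ pendant leaves hanging at one branch vertex form one cell, the $t$ leaves at the other branch vertex form a second, and the $u=n-k+1$ vertices incident to no negative edge form a third. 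A short edge count pins down the rest: the bicyclic core carries $k-(s+t)=6$ edges and hence $5$ vertices, and to reach a total of seven cells these five core vertices must occupy exactly four cells, which forces the unique pair of non-adjacent twins of the core to be grouped together and the remaining three to stay as singletons. One then verifies the partition is \emph{special}: every cell induces a positive clique, and between any two cells all edges share one sign (negative exactly on the pairs joined by edges of $B$, positive otherwise). Because all seven cells are positive cliques we are in the case $q=0$ of Lemma \ref{Lem:partitionquotient}, so the $(\lambda-1)$-power vanishes and the exponent of $(\lambda+1)$ is $\sum_{i=1}^{7}n_i-7=n-7$, which already supplies the stated prefactor.

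It then remains to identify $\varphi(Q,\lambda)$ with $F(\lambda)$, where $Q$ is the $7\times 7$ quotient matrix of $A(\Gamma)$ for this partition. I would write $Q$ down entrywise by the standard rules for a special partition: the diagonal entry in cell $i$ is the positive-clique row sum $n_i-1$, while an off-diagonal entry is $+n_j$ or $-n_j$ according to whether the edges from cell $i$ to cell $j$ are positive or negative. With cell sizes $1,1,1,2,s,t,u$ this records the adjacency pattern of $\theta_1(s,t)$ into a matrix whose entries are affine in $s,t,u$, the free-vertex cell being positive to every other cell.

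The main computational effort, and the chief obstacle, is evaluating the degree-seven determinant $\det(\lambda I-Q)$ and checking that, after substituting $u=n-s-t-5=n-k+1$ and $k=s+t+6$, it collapses to the displayed polynomial $F(\lambda)$. I would organise this by eliminating the free-vertex cell and the two pendant cells first via row/column operations, so that the mixed monomials $ku$, $st$, $su$, and $stu$ appearing in $F$ arise naturally as products of the relevant off-diagonal sizes, and then match the coefficient of each power of $\lambda$ against the list defining $F$. Beyond the bookkeeping, the only genuine subtlety is to encode correctly the signs on the edges among the three singleton core cells and the twin-pair cell, since an error there perturbs the lower-order coefficients of $F$ without touching its leading terms.
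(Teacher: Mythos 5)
Your proposal follows essentially the same route as the paper: a seven-cell special partition of $V(K_n)$ with singletons $\{v_1\},\{u_1\},\{v_3\}$, the non-adjacent twin pair $\{v_2,\omega_1\}$, the $s$ and $t$ pendant classes, and the $u=n-k+1$ free vertices, followed by Lemma \ref{Lem:partitionquotient} with $q=0$ and a direct computation of $\varphi(Q,\lambda)=F(\lambda)$. The paper likewise leaves the $7\times 7$ determinant to ``direct calculation,'' so your plan matches it in both structure and level of detail.
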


\begin{proof}
Assume that the set of vertices of the signed graph $\Gamma$ is partitioned into parts $X_1,\cdots,X_7$, where
$X_1=\{v_1\}$, $X_2=\{u_1\}$, $X_3=\{v_3\}$, $X_4=\{v_2,\omega_1\}$, $X_5=N_{\theta_1(s,t)}(v_3)\backslash\{u_1,v_2,\omega_1\}$, $X_6=N_{\theta_1(s,t)}(u_1)\backslash\{v_1,v_3\}$ and
$X_7=V(K_n)\backslash(X_1\cup X_2\cup X_3\cup X_4\cup X_5\cup X_6)$.

The equitable quotient matrix $Q$ of $A(\Gamma)$ related to $X_1,\cdots,X_7$ is
\begin{equation*}
\begin{split}
Q=\left(
\begin{array}{ccccccc}
0 & -1 & 1 & -2 & t & s & u\\
-1 & 0 & -1 & 2 & t & -s & u\\
1 & -1 & 0 & -2 & -t & s & u\\
-1 & 1 & -1 & 1 & t & s & u\\
1 & 1 & -1 & 2 & t-1 & s & u\\
1 & -1 & 1 & 2 & t & s-1 & u\\
1 & 1 & 1 & 2 & t & s & u-1
\end{array}
\right),
\end{split}
\end{equation*}
where $u=n-s-t-5=n-k+1$.

By direct calculation, the characteristic polynomial of $Q$ is
\begin{equation*}
\begin{split}
\varphi(Q,\lambda)=F(\lambda).
\end{split}
\end{equation*}

Since $\Gamma[X_i]=(K_{n_i},+)$ for $i=1,\cdots,7$ and
all the edges have the same sign between $X_i$ and $X_j$ for each $i,j$,
$\{X_1,\cdots,X_7\}$ is a special partition. By Lemma \ref{Lem:partitionquotient}, we have
$\varphi(\Gamma,\lambda)=(\lambda+1)^{n-7}\varphi(Q,\lambda).$
\end{proof}

To simplify representation, let
\begin{equation*}
\begin{split}
P(\lambda)=&\lambda^5+(5-n)\lambda^4+(10-4n)\lambda^3+(4ku-6n+8k-22)\lambda^2+(8ku-36n\\
&+48k-91)\lambda-28ku+127n-120k+97,
\end{split}
\end{equation*}
where $u=n-k+1$. Then, we have following result.

\begin{lemma}\label{lem:BBB2}
    If $\Gamma=(K_n,\theta_{2}(0,k-5))$. Then $\varphi(\Gamma,\lambda)=(\lambda+1)^{n-5}P(\lambda)$.
\end{lemma}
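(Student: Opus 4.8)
The plan is to mirror the method used in the proof of Lemma~\ref{lem:BBB1}: exhibit a special equitable partition of $V(\Gamma)$ whose quotient matrix $Q$ is $5\times 5$, check that $\varphi(Q,\lambda)=P(\lambda)$, and then invoke Lemma~\ref{Lem:partitionquotient}. The exponent $(\lambda+1)^{n-5}$ in the target formula already dictates the shape of the argument: the partition must consist of exactly five parts, each of which is a positive clique $(K_{n_i},+)$, so that the exponent $\sum_{i=1}^{5}n_i-5$ produced by Lemma~\ref{Lem:partitionquotient} equals $n-5$.

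First I would read off the structure of $\theta_2(0,k-5)$ from Figure~\ref{fig:B11}. As $\theta_2(0,k-5)$ is bicyclic with $k$ edges, it has $k-1$ vertices, of which the bicyclic "base'' occupies $4$ core vertices ($5$ edges) and the remaining $k-5$ vertices are pendants attached at a single vertex of the base. I would then partition $V(\Gamma)$ by grouping vertices of equal role under the automorphisms of $\theta_2(0,k-5)$: the base vertex carrying the pendants and the opposite base vertex as two singleton parts $X_1,X_2$; the symmetric pair of remaining base vertices as one part $X_3$ of size $2$; the $k-5$ pendants as one part $X_4$; and the $u=n-k+1$ vertices incident to no negative edge as the last part $X_5$. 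A quick count gives $1+1+2+(k-5)+(n-k+1)=n$, confirming that $\eta=\{X_1,\dots,X_5\}$ has the five parts demanded above.

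Next I would verify that $\eta$ is \emph{special} in the sense of Section~\ref{Sec:mathcalG}. Each part is an independent set of $\theta_2(0,k-5)$ (the two vertices of $X_3$ are the nonadjacent pair of the base, and the pendants and the untouched vertices are trivially independent), so every $\Gamma[X_i]$ is a positive clique $(K_{n_i},+)$; and all edges between any two parts carry a common sign because vertices lying in the same part have identical adjacency patterns in $\theta_2(0,k-5)$. Hence $\eta$ is equitable, and the entry $q_{ij}$ of the quotient matrix $Q$ is the signed count of neighbours that a vertex of $X_i$ has in $X_j$. Writing $Q$ down explicitly is the heart of the proof, precisely as the $7\times 7$ matrix was in Lemma~\ref{lem:BBB1}.

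Finally, a direct expansion of $\det(\lambda I-Q)$ produces the degree-$5$ polynomial $P(\lambda)$, whereupon Lemma~\ref{Lem:partitionquotient} yields $\varphi(\Gamma,\lambda)=(\lambda+1)^{n-5}\varphi(Q,\lambda)=(\lambda+1)^{n-5}P(\lambda)$, as claimed. I expect the main obstacle to be bookkeeping rather than conceptual: correctly determining the signed adjacency counts between the parts, in particular the entries linking the two triangles of the base and the entry relating the pendant-bearing vertex $X_1$ to the pendant block $X_4$, and then carrying the $5\times 5$ characteristic-polynomial computation through so that every coefficient, including the $ku$-terms and the constant $-28ku+127n-120k+97$, agrees with $P(\lambda)$ exactly.
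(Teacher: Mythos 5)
Your proposal is correct and follows essentially the same route as the paper: the same five-part special partition (two singleton base vertices, the symmetric pair $\{u_1,\omega_1\}$, the $k-5$ pendants, and the $u=n-k+1$ untouched vertices), the same $5\times5$ equitable quotient matrix, and the same appeal to Lemma~\ref{Lem:partitionquotient}. The only remaining work, as you note, is the direct expansion of $\det(\lambda I-Q)$, which is exactly what the paper does.
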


\begin{proof}
Assume that the set of vertices of the signed graph $\Gamma$ is partitioned into parts $X_1,\cdots,X_5$, such that
$X_1=\{v_1\}$, $X_2=\{v_2\}$, $X_3=\{u_1,\omega_1\}$, $X_4=N_{\theta_2(0,k-5)}(v_2)\backslash\{v_1,u_1,\omega_1\}$ and
$X_5=V(K_n)\backslash(X_1\cup X_2\cup X_3\cup X_4)$.

The equitable quotient matrix $Q$ of $A(\Gamma)$ related to $X_1,\cdots,X_5$ is
\begin{equation*}
\begin{split}
Q=\left(
\begin{array}{ccccccc}
0 & -1 & -2 & k-5 & u\\
-1 & 0 & -2 & 5-k & u\\
-1 & -1 & 1 & k-5 & u\\
1 & -1 & 2 & k-6 & u\\
1 & 1 & 2 & k-5 & u-1\\
\end{array}
\right),
\end{split}
\end{equation*}
where $u=n-k+1$.

By direct calculation, the characteristic polynomial of $Q$ is
\begin{equation*}
\begin{split}
\varphi(Q,\lambda)=P(\lambda).
\end{split}
\end{equation*}

Since $\Gamma[X_i]=(K_{n_i},+)$ for $i=1,\cdots,5$ and all the edges have the same sign between $X_i$ and $X_j$ for each $i,j$, the set
$\{X_1,\cdots,X_5\}$ is a special partition. By Lemma \ref{Lem:partitionquotient}, we have
$\varphi(\Gamma,\lambda)=(\lambda+1)^{n-5}\varphi(Q,\lambda).$
\end{proof}

\begin{lemma}\label{lem:BBBst1}
$\lambda_1((K_n,\theta_{1}(s,t)))\leqslant\max\{\lambda_1((K_n,\theta_{1}(0,k-6))),\lambda_1((K_n,\theta_{1}(k-6,0)))\}$, $k\geqslant15$, for each pair $s,t$, where $s+t=k-6$.
\end{lemma}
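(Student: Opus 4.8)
The plan is to regard the index of $(K_n,\theta_1(s,t))$ as a function of $s$ along the line $s+t=k-6$, and to show that at the relevant value of $\lambda$ the defining polynomial $F(\lambda)$ from Lemma~\ref{lem:BBB1} is \emph{concave} in $s$; concavity forces the index to be maximized at an endpoint $s\in\{0,k-6\}$. Write $\Gamma_s=(K_n,\theta_1(s,t))$ with $t=k-6-s$, so that $\Gamma_0=(K_n,\theta_1(0,k-6))$ and $\Gamma_{k-6}=(K_n,\theta_1(k-6,0))$. Since $\lambda_1(\Gamma_s)$ is far larger than $1$ (indeed of order $n$, as the Rayleigh quotient of the all-ones vector shows), Lemma~\ref{lem:BBB1} identifies $\lambda_1(\Gamma_s)$ with the largest root of $F$, so it suffices to control that largest root as $s$ varies.

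First I would isolate the $s$- and $t$-dependence of $F(\lambda)$. Grouping the coefficients, the terms carrying a lone $s$ or $su$ sum to $8s(\lambda+1)^4-16su(\lambda+1)^2=8sP(P-2u)$ with $P:=(\lambda+1)^2$, while the terms carrying $st$ or $stu$ sum to $8st(P-8)(P-2u)$, using the factorization $\lambda^4+4\lambda^3-2\lambda^2-12\lambda-7=(\lambda+1)^2\big[(\lambda+1)^2-8\big]$. Hence
\begin{equation*}
F(\lambda)=A(\lambda)+8s\,P(P-2u)+8st\,(P-8)(P-2u),
\end{equation*}
where $A(\lambda)$ collects all terms free of $s$ and $t$. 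Substituting $t=k-6-s$ (since $s+t=k-6$ and $u=n-k+1$ is constant) turns the right-hand side into a quadratic in $s$ whose leading coefficient is $-8(P-8)(P-2u)$.

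Next I would set $\lambda^{*}:=\max\{\lambda_1(\Gamma_0),\lambda_1(\Gamma_{k-6})\}$ and evaluate at $\lambda=\lambda^{*}$. Because $\lambda^{*}$ is of order $n$, one has $P=(\lambda^{*}+1)^2>\max\{8,2u\}$, so the leading coefficient $-8(P-8)(P-2u)$ is negative and $s\mapsto F(\lambda^{*},s)$ is a concave quadratic on $[0,k-6]$. At the two endpoints $\lambda^{*}$ is at least the largest root of the corresponding $F$, namely $\lambda_1(\Gamma_0)$ respectively $\lambda_1(\Gamma_{k-6})$, so since $F$ is monic of degree $7$ we get $F(\lambda^{*},0)\ge0$ and $F(\lambda^{*},k-6)\ge0$. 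A concave function that is nonnegative at both endpoints of an interval is nonnegative throughout it, whence $F(\lambda^{*},s)\ge0$ for every real $s\in[0,k-6]$, which in particular covers every integer choice of $s$.

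Finally I would convert $F(\lambda^{*},s)\ge0$ into $\lambda_1(\Gamma_s)\le\lambda^{*}$, and this is where the main obstacle lies and where the hypotheses $n\ge k+20$, $k\ge15$ are used. I must guarantee that $\lambda^{*}$ exceeds the second largest root of $F(\cdot,s)$, i.e.\ $\lambda^{*}>\lambda_2(\Gamma_s)$, so that the sign of $F$ just below its largest root is determined. Writing $A(\Gamma_s)=(J-I)-2A_B$, where $A_B$ is the $0/1$ adjacency matrix of the bicyclic graph, Weyl's inequality gives $\lambda_2(\Gamma_s)\le-1+2\rho(A_B)$, a constant depending only on $k$ (since $B$ has $k$ edges); meanwhile the all-ones Rayleigh quotient yields $\lambda_1(\Gamma_s)\ge n-1-\tfrac{4k}{n}\ge n-5$, so $\lambda^{*}\ge n-5$ strictly exceeds that constant bound under the stated numerical hypotheses. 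Granting this gap, if we had $\lambda_1(\Gamma_s)>\lambda^{*}$ then $\lambda^{*}$ would lie strictly between the two largest roots of $F(\cdot,s)$, forcing $F(\lambda^{*},s)<0$ and contradicting the previous paragraph. Therefore $\lambda_1(\Gamma_s)\le\lambda^{*}=\max\{\lambda_1(\Gamma_0),\lambda_1(\Gamma_{k-6})\}$ for every admissible $s$, which is exactly the assertion of the lemma.
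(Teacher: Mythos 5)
Your proof is correct, but it takes a genuinely different route from the paper's. The paper argues by pairwise comparison along the family: it computes $\varphi((K_n,\theta_1(s-1,t+1)),\lambda)-\varphi((K_n,\theta_1(s,t)),\lambda)=8(\lambda+1)^{n-7}p(\lambda)$, splits $p$ into two pieces that are respectively decreasing and increasing on the relevant range, and bounds the sign of $p$ at the index separately in the two regimes $s\leqslant t+2$ and $s>t+2$; this produces a chain of strict inequalities showing the index grows as $(s,t)$ moves toward either endpoint of the line $s+t=k-6$. You replace that monotonicity analysis by the exact identity
\begin{equation*}
F(\lambda)=A(\lambda)+8s\,P(P-2u)+8st\,(P-8)(P-2u),\qquad P=(\lambda+1)^2,
\end{equation*}
which I have checked against the coefficients of $F$ in Lemma~\ref{lem:BBB1} (including the factorization $\lambda^4+4\lambda^3-2\lambda^2-12\lambda-7=(\lambda+1)^2[(\lambda+1)^2-8]$); along $s+t=k-6$ this makes $F(\lambda^{*},\cdot)$ a concave quadratic in $s$ whenever $(\lambda^{*}+1)^2>\max\{8,2u\}$, and endpoint nonnegativity finishes the job. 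What your route buys is the elimination of the two-regime case split and of the ad hoc estimates $p_1,p_2,P_1,P_2$ in the appendix; what it costs is the extra spectral-gap step $\lambda^{*}>\lambda_2(\Gamma_s)$, which you need in order to convert $F(\lambda^{*},s)\geqslant0$ back into $\lambda^{*}\geqslant\lambda_1(\Gamma_s)$, whereas the paper only ever uses the easy implication that a monic characteristic polynomial which is negative at $\lambda_1(\Gamma)$ has a root above $\lambda_1(\Gamma)$. Your gap step is sound (Weyl's inequality with $A(\Gamma_s)=(J-I)-2A_B$ and $\rho(A_B)\leqslant\sqrt{2k}$ gives $\lambda_2(\Gamma_s)\leqslant-1+2\sqrt{2k}<n-5\leqslant\lambda^{*}$), but note that this is the one place where you invoke $n\geqslant k+20$, a hypothesis absent from the lemma as stated and not needed in the paper's proof; since the lemma is only applied in that regime this is harmless, but you should either add the hypothesis to your statement or sharpen the bound on $\rho(A_B)$ so that $n\geqslant k-1$ and $k\geqslant15$ suffice.
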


This Lemma is derived through direct calculation, and the proof can be found in \ref{App:A}.

\begin{lemma}\label{lem:BBBmax} Let $k\geqslant15$, $n\geqslant k+20$. \textup{(i)} $\lambda_1((K_n,\theta_{1}(0,k-6)))<\lambda_1((K_n,\theta_{2}(0,k-5)))$. \\
\textup{(ii)} $\lambda_1((K_n,\theta_{1}(k-6,0)))<\lambda_1((K_n,\theta_{2}(0,k-5)))$.
\end{lemma}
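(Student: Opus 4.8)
The plan is to reduce both inequalities to the negativity of $P$ at a single point. By Lemma~\ref{lem:BBB2} the index $\beta:=\lambda_1((K_n,\theta_2(0,k-5)))$ is the largest root of $P(\lambda)$, and by Lemma~\ref{lem:BBB1} the two quantities on the left-hand sides are the largest roots of $F_1:=F|_{s=0,\,t=k-6}$ and $F_2:=F|_{s=k-6,\,t=0}$ (each index exceeds $-1$, hence is a root of the degree-$7$ factor and not of $(\lambda+1)^{n-7}$). Write $\alpha$ for the $\theta_1$-index in the case under consideration, so $F_i(\alpha)=0$. Since $P$ has positive leading coefficient and $\beta$ is its largest root, $P(\lambda)>0$ for all $\lambda>\beta$; therefore it is enough to prove $P(\alpha)<0$, which immediately gives $\alpha<\beta$, i.e.\ the claimed strict inequality. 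This turns both (i) and (ii) into a single task: determine the sign of $P$ at the largest root of $F_i$.

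To access $P(\alpha)$ without solving for $\alpha$, I would match degrees by setting $\tilde P(\lambda):=(\lambda+1)^2P(\lambda)$, so that $\varphi((K_n,\theta_2(0,k-5)),\lambda)=(\lambda+1)^{n-7}\tilde P(\lambda)$ carries the same factor $(\lambda+1)^{n-7}$ as the $\theta_1$-graphs and $\deg\tilde P=\deg F_i=7$. As $\alpha>-1$, $\operatorname{sgn}P(\alpha)=\operatorname{sgn}\tilde P(\alpha)$. Introduce
\begin{equation*}
D_i(\lambda):=F_i(\lambda)-\tilde P(\lambda)=F_i(\lambda)-(\lambda+1)^2P(\lambda),\qquad i=1,2.
\end{equation*}
Comparing coefficients, the $\lambda^7$, $\lambda^6$ and $\lambda^5$ terms of $F_i$ and of $(\lambda+1)^2P$ agree, so $\deg D_i\le 4$, and its $\lambda^4$-coefficient is $8(k-8)$ for $i=1$ and $16(k-7)$ for $i=2$, both positive for $k\ge 15$. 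Using $F_i(\alpha)=0$ gives $\tilde P(\alpha)=-D_i(\alpha)$, so the required inequality $P(\alpha)<0$ is equivalent to $D_i(\alpha)>0$.

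It then suffices to localize $\alpha$ and prove $D_i>0$ there. A crude lower bound comes from the Rayleigh quotient at the all-ones vector: as $\theta_1(s,t)$ contributes exactly $k$ negative edges, $\alpha\ge (n-1)-\tfrac{4k}{n}>n-5$ when $n\ge k+20$, while the standard bound $\alpha\le\lambda_1(K_n)=n-1$ gives $\alpha\in(n-5,\,n-1]$. The proof thus reduces to the algebraic statement that the degree-$4$ polynomial $D_i(\lambda)$ is positive on $(n-5,\,n-1]$ for all $n\ge k+20$ and $k\ge 15$. I expect this to be the main obstacle: the dominant contribution is $\Theta(k)\cdot\lambda^4$ evaluated at $\lambda=\Theta(n)$, which is clearly positive, but the subleading coefficients of $D_i$ are themselves polynomials in $n$ and $k$ (through $u=n-k+1$) and must be controlled uniformly over the two-parameter range. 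I would carry this out by the substitution $\lambda=n-1-x$ with $x\in[0,4)$, expanding $D_i(n-1-x)$ as a polynomial in $x$ with coefficients explicit in $n,k$, and then showing termwise positivity (or dominating the few negative terms by the positive constant term) on the admissible region; the cases $i=1,2$ run in parallel with the two leading constants recorded above.
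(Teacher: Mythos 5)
Your reduction is correct and is essentially the route the paper takes: the paper also compares the two characteristic polynomials directly, writing $\varphi((K_n,\theta_2(0,k-5)),\lambda)-\varphi((K_n,\theta_1(\cdot,\cdot)),\lambda)=-c\,(\lambda+1)^{n-7}\,d(\lambda)$ with $d$ an explicit quartic (your $D_i$ equals $8s(\lambda)$ in case (i) and $16S(\lambda)$ in case (ii), and your leading coefficients $8(k-8)$ and $16(k-7)$ match), evaluating at the $\theta_1$-index $\alpha$, and concluding $\lambda_1(\theta_2)>\alpha$ from $d(\alpha)>0$. Your formulation via $P(\alpha)<0$ and $\tilde P=(\lambda+1)^2P$ is an equivalent repackaging of the paper's $\varphi(\Gamma',\alpha)<0$, and your localization $\alpha\in(n-5,n-1]$ by the Rayleigh quotient is a (slightly weaker) substitute for the paper's interlacing bounds $\alpha\geqslant n-3$ (case (i), deleting the $2$-vertex cover $\{v_1,v_3\}$ of $\theta_1(0,k-6)$) and $\alpha\geqslant n-4$ (case (ii)).

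The one genuine shortfall is that the decisive step --- proving $D_i>0$ on the localizing interval --- is only announced, not carried out, and for this lemma that verification \emph{is} the proof; it is also the only place the hypotheses $k\geqslant15$, $n\geqslant k+20$ can enter. The paper executes it not by your proposed substitution $\lambda=n-1-x$ with termwise positivity, but by splitting $D_i=d_1+d_2$ into a part that is increasing on the interval (bounded below by its value at the left endpoint $n-3$, resp.\ $n-4$) and a part that is decreasing (bounded below by its value at $n-1$), and then checking that the resulting explicit polynomial in $n,k$, e.g.\ $(k^2-19k+64)n^3+(2k^3+28k^2-238k+120)n+80k+88$ in case (i), is positive. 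Your plan would also work, but note that with the weaker left endpoint $n-5$ you give yourself a wider interval on which positivity must be certified, and the subleading coefficients of $D_i$ are of size $\Theta(kn)$ multiplying $\lambda^2\approx n^2$, so the margin against the $\Theta(k)\lambda^4$ term is only one power of $n$; the bookkeeping must actually be done before the lemma can be considered proved.
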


The calculation process of Lemma \ref{lem:BBBmax} is also included in \ref{App:A}.
Combining Lemma \ref{lem:BBBst1} and Lemma \ref{lem:BBBmax}, we can see that $\lambda_1((K_n,\theta_{2}(0,k-5)))$ is the maximum index of the signed graphs in $\mathcal{G}$.

\section{The maximum index of $(K_n,B)$}\label{Sec:maxindex}

In this section, we provide the proof of Theorem \ref{thm:maximumindexall}. Before proving the theorem, we need the following results.

\begin{theorem}\label{thm:interlacing}\textup{(\cite{2011spectraofgraphs})}
\textup(Cauchy Interlacing Theorem\textup) Let $A$ be a Hermitian matrix of order $n$ with eigenvalues $\lambda_1\geqslant\cdots\geqslant\lambda_n$ and $B$ be a principal submatrix of $A$ of order $m$ with eigenvalues $\mu_1\geqslant\cdots\geqslant\mu_m$, then $\lambda_i\geqslant \mu_i\geqslant\lambda_{n-m+i}$ for $i=1,\cdots,m$.
\end{theorem}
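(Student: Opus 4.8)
The plan is to prove the interlacing inequalities through the Courant--Fischer min--max characterization of the eigenvalues of a Hermitian matrix, which converts each eigenvalue into an optimization of the Rayleigh quotient over subspaces of prescribed dimension. First I would reduce to the convenient case in which $B$ is the leading $m\times m$ principal block of $A$: permuting the index set that defines the principal submatrix amounts to conjugating $A$ by a permutation matrix, so the spectrum of $A$ is unchanged while $B$ is carried to the leading block, and no generality is lost. I would then fix the isometric embedding $\iota:\mathbb{C}^m\to\mathbb{C}^n$, $\iota(y)=(y,0)^{\top}$, and record the single structural fact on which everything rests: for every nonzero $y\in\mathbb{C}^m$ one has $\iota(y)^{*}A\,\iota(y)=y^{*}By$ and $\iota(y)^{*}\iota(y)=y^{*}y$, so the Rayleigh quotient $R_A$ of $A$ at $\iota(y)$ equals the Rayleigh quotient $R_B$ of $B$ at $y$, and $\iota$ carries an $i$-dimensional subspace of $\mathbb{C}^m$ to an $i$-dimensional subspace of $\mathbb{C}^n$.

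For the upper bounds $\mu_i\leqslant\lambda_i$ I would use the max--min form $\lambda_i(A)=\max_{\dim S=i}\min_{0\neq x\in S}R_A(x)$. Taking $T\subseteq\mathbb{C}^m$ to be the $i$-dimensional subspace achieving the maximum for $\mu_i(B)$ and feeding $S=\iota(T)$ into the outer maximum gives $\lambda_i(A)\geqslant\min_{x\in S}R_A(x)=\min_{y\in T}R_B(y)=\mu_i(B)$. For the lower bounds $\mu_i\geqslant\lambda_{n-m+i}$ I would instead use the min--max form $\lambda_j(A)=\min_{\dim S=n-j+1}\max_{0\neq x\in S}R_A(x)$; with $j=n-m+i$ the relevant dimension is $n-j+1=m-i+1$, which is exactly the dimension appearing in the dual Courant--Fischer description $\mu_i(B)=\min_{\dim T=m-i+1}\max_{y\in T}R_B(y)$. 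Embedding the optimal $T$ via $\iota$ produces an admissible competitor for $\lambda_{n-m+i}(A)$, so $\lambda_{n-m+i}(A)\leqslant\mu_i(B)$. Combining the two chains gives $\lambda_{n-m+i}\leqslant\mu_i\leqslant\lambda_i$ for every $i=1,\dots,m$.

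The routine content is minimal; the only place demanding care is the index and dimension bookkeeping in the second inequality, where it is easy to mismatch the two forms of Courant--Fischer and land on the wrong eigenvalue index. The clean way to avoid this is to commit to a single convention (eigenvalues in nonincreasing order, $\lambda_i$ equal to the max--min over $i$-dimensional subspaces) and to verify once that $n-(n-m+i)+1=m-i+1$, so that the embedded subspace has precisely the dimension the min--max formula for $\lambda_{n-m+i}$ requires. As a cross-check I would mention the alternative route that first settles the one-step case $m=n-1$, where the interlacing $\lambda_1\geqslant\mu_1\geqslant\lambda_2\geqslant\cdots\geqslant\mu_{n-1}\geqslant\lambda_n$ follows from the classical bordered-matrix analysis of the two characteristic polynomials, and then obtains the general statement by deleting one row and column at a time and composing the resulting chains of inequalities; this makes the bookkeeping transparent at the cost of an induction on $n-m$.
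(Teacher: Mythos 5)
The paper does not prove this statement at all: it is quoted as a classical result (the Cauchy Interlacing Theorem) with a citation to Brouwer and Haemers, so there is no in-paper argument to compare yours against. Your proposal is a correct and complete proof by the standard route. The reduction to the leading principal block by permutation conjugation is legitimate, the identity $\iota(y)^{*}A\,\iota(y)=y^{*}By$ is exactly the structural fact needed, and the two applications of Courant--Fischer are matched correctly: the max--min form over $i$-dimensional subspaces yields $\mu_i\leqslant\lambda_i$, and the dual min--max form over subspaces of dimension $m-i+1=n-(n-m+i)+1$ yields $\lambda_{n-m+i}\leqslant\mu_i$. The dimension bookkeeping you flag as the delicate point checks out, and the alternative inductive route via the bordered case $m=n-1$ that you sketch as a cross-check is also sound. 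Nothing further is needed.
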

\begin{lemma}\label{lem:perturbation}\textup{(\cite{Koledin2017Connected})}
Let $r,s,t$ be distinct vertices of a signed graph $\Gamma$, $\mathbf{x}=(x_1,\cdots,x_n)^T$ be an eigenvector corresponding to the index $\lambda_1(\Gamma)$. Let $\Gamma'$ be obtained by reversing the sign of the positive edge $rt$ and the negative edge $rs$. If $x_r\geqslant0$, $x_s\geqslant x_t$, or $x_r\leqslant0$, $x_s\leqslant x_t$, then $\lambda_1(\Gamma')\geqslant\lambda_1(\Gamma)$, and if at least one inequality for the entries of $\mathbf{x}$ is strict, then $\lambda_1(\Gamma')>\lambda_1(\Gamma)$.
\end{lemma}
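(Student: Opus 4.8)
The plan is to prove this perturbation estimate through the variational (Rayleigh–Ritz) characterization of the index, exploiting the fact that $A(\Gamma')$ and $A(\Gamma)$ differ in only four entries. First I would normalize the index eigenvector so that $\mathbf{x}^{T}\mathbf{x}=1$; then $\lambda_1(\Gamma)=\mathbf{x}^{T}A(\Gamma)\mathbf{x}$, since $A(\Gamma)\mathbf{x}=\lambda_1(\Gamma)\mathbf{x}$, while applying Rayleigh–Ritz to the symmetric matrix $A(\Gamma')$ with this same trial vector gives $\lambda_1(\Gamma')\geqslant\mathbf{x}^{T}A(\Gamma')\mathbf{x}$. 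Subtracting the two relations reduces the whole problem to determining the sign of the quadratic form of the difference matrix $\Delta:=A(\Gamma')-A(\Gamma)$.

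The second step is the explicit evaluation of $\Delta$. Reversing the sign of the positive edge $rt$ turns $a_{rt}^{\sigma}=a_{tr}^{\sigma}=+1$ into $-1$, so $\Delta_{rt}=\Delta_{tr}=-2$, and reversing the negative edge $rs$ turns $a_{rs}^{\sigma}=a_{sr}^{\sigma}=-1$ into $+1$, so $\Delta_{rs}=\Delta_{sr}=+2$; all other entries of $\Delta$ vanish. Consequently
\[
\mathbf{x}^{T}A(\Gamma')\mathbf{x}-\mathbf{x}^{T}A(\Gamma)\mathbf{x}=\mathbf{x}^{T}\Delta\mathbf{x}=2\Delta_{rt}x_rx_t+2\Delta_{rs}x_rx_s=4\,x_r(x_s-x_t).
\]
Under either hypothesis this quantity is nonnegative: if $x_r\geqslant 0$ and $x_s\geqslant x_t$ both factors are nonnegative, and if $x_r\leqslant 0$ and $x_s\leqslant x_t$ both factors are nonpositive. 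Combining this with the two inequalities from the previous paragraph yields $\lambda_1(\Gamma')\geqslant\lambda_1(\Gamma)$, which settles the weak statement.

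The delicate part, and the step I expect to be the main obstacle, is the strict inequality, because when exactly one of the hypothesized inequalities is strict the product $x_r(x_s-x_t)$ can still equal $0$ (for instance $x_r>0$ but $x_s=x_t$), so the Rayleigh bound alone returns only $\geqslant$. To upgrade to $>$ I would argue by contradiction: assume $\lambda_1(\Gamma')=\lambda_1(\Gamma)=:\lambda$. Then the chain of inequalities above collapses, forcing $\mathbf{x}^{T}A(\Gamma')\mathbf{x}=\lambda_1(\Gamma')$; equality in Rayleigh–Ritz for the largest eigenvalue of a symmetric matrix means the trial vector $\mathbf{x}$ lies in the top eigenspace, i.e. $\mathbf{x}$ is itself a $\lambda_1(\Gamma')$-eigenvector of $A(\Gamma')$. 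Since $\mathbf{x}$ is then a $\lambda$-eigenvector of both $A(\Gamma)$ and $A(\Gamma')$, we obtain $\Delta\mathbf{x}=0$. Reading off the $s$-, $t$-, and $r$-coordinates of $\Delta\mathbf{x}$ gives $2x_r=0$, $-2x_r=0$, and $2(x_s-x_t)=0$, hence $x_r=0$ and $x_s=x_t$; thus both hypothesized inequalities are in fact equalities, contradicting the assumption that at least one is strict. Therefore $\lambda_1(\Gamma')>\lambda_1(\Gamma)$, completing the argument.
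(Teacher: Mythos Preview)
Your argument is correct and is exactly the standard Rayleigh--Ritz proof of this perturbation lemma: bound $\lambda_1(\Gamma')$ below by $\mathbf{x}^{T}A(\Gamma')\mathbf{x}$, compute the $4x_r(x_s-x_t)$ gain, and for strictness use that equality in Rayleigh--Ritz forces $\mathbf{x}$ to be a top eigenvector of $A(\Gamma')$ as well, whence $\Delta\mathbf{x}=0$ pins down $x_r=0$ and $x_s=x_t$.

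There is nothing to compare against in this paper: the authors quote Lemma~\ref{lem:perturbation} from Koledin and Stani\'{c} \cite{Koledin2017Connected} and use it as a black box throughout Section~\ref{Sec:maxindex}, without reproving it. Your write-up would supply precisely the missing justification, and it matches the original argument in \cite{Koledin2017Connected}. One small cosmetic point: in the line ``Reading off the $s$-, $t$-, and $r$-coordinates of $\Delta\mathbf{x}$'' you listed the coordinates in the order $s,t,r$ but the equations $2x_r=0$, $-2x_r=0$, $2(x_s-x_t)=0$ are the $s$-, $t$-, $r$-rows of $\Delta\mathbf{x}$ respectively, so the labeling is consistent; just be aware that a reader might double-check which row gives which equation.
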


Suppose $\Gamma=(K_n,B)$ is a signed complete graph of order $n$ whose $k$ negative edges induce a bicyclic graph $B$ that maximizes the index. Let $\lambda_1=\lambda_1(\Gamma)$ and $\mathbf{x}$ be an eigenvector corresponding to $\lambda_1$. Since $(K_{n-k+2},+)$ is an induced subgraph of $\Gamma$, by Theorem \ref{thm:interlacing}, we have $\lambda_1\geqslant n-k+1$. We denote the cycle of length $l$ by $C_l$. Suppose $C_{l_1}$ and $C_{l_2}$ are the two vertex-induced cycles of $B$. Without loss of generality, we assume that $l_1\leqslant l_2$.

\begin{lemma}\label{Lem:Cl1Cl2} There exists a vertex $v_{i}$ in $C_{l_i}$ such that $x_{v_i}\neq0$, $i=1,2$.
\end{lemma}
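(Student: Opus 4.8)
The plan is to argue by contradiction, assuming that for some $i \in \{1,2\}$ every vertex of the induced cycle $C_{l_i}$ carries a zero entry in the Perron-type eigenvector $\mathbf{x}$, and then to derive a contradiction with the lower bound $\lambda_1 \geqslant n-k+1$ that we already have from interlacing. The central tool will be the eigenvalue equation: for each vertex $v$ of $\Gamma$ we have $\lambda_1 x_v = \sum_{u \neq v} \sigma(vu) x_u$, where the sum runs over all other vertices since $\Gamma = (K_n, B)$ is complete.

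The key computational step is to sum the eigenvalue equations over the $l_i$ vertices of $C_{l_i}$. Write $S = \sum_{v \in V(\Gamma)} x_v$ for the total sum of entries, and suppose for contradiction that $x_v = 0$ for every $v \in V(C_{l_i})$. Then the left-hand side of each summed equation vanishes. On the right-hand side, each vertex $v \in C_{l_i}$ contributes $\sum_{u \neq v} \sigma(vu) x_u$; since all edges are positive except the $k$ edges of $B$, this equals $(S - x_v) - 2\sum_{u \sim_B v} x_u = S - 2\sum_{u \in N_B(v)} x_u$ (using $x_v = 0$). Summing over $v \in V(C_{l_i})$ gives $0 = l_i S - 2\sum_{v \in C_{l_i}} \sum_{u \in N_B(v)} x_u$. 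Because every vertex of $C_{l_i}$ has its two cycle-neighbours also inside $C_{l_i}$ (and hence contributing zero), the double sum only sees the entries of vertices in $N_B(v)$ lying \emph{outside} $C_{l_i}$, which are the pendant and attachment neighbours. I would then combine this with a second relation obtained by examining a single vertex of $C_{l_i}$ or by using the structure of $B$ to pin down $S$, ultimately showing that $\lambda_1 S$ must satisfy an equation incompatible with $\lambda_1$ being large.

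A cleaner route to the same contradiction is to observe that if all entries on $C_{l_i}$ vanish, then by the eigenvalue equation applied at each such vertex, the weighted sums $\sum_{u \neq v}\sigma(vu)x_u$ are forced to be zero for all $v \in C_{l_i}$ simultaneously; subtracting the equations for two adjacent cycle vertices $v, v'$ removes the common large positive contribution from $S$ and isolates a small, $k$-dependent expression involving only the few neighbours that distinguish $v$ from $v'$ in $B$. Since $\lambda_1 \geqslant n-k+1$ is large relative to these bounded local contributions when $n \geqslant k+20$, one shows the resulting system forces the $x_u$ on the rest of the graph to be zero as well, whence $\mathbf{x} = \mathbf{0}$, contradicting that $\mathbf{x}$ is an eigenvector.

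The main obstacle I anticipate is controlling the interaction between the two cycles and the attachment/pendant vertices: a bicyclic graph has the two cycles joined either by a shared path or meeting at vertices, so the neighbourhoods $N_B(v)$ for $v \in C_{l_i}$ are not entirely disjoint from $C_{l_j}$ ($j \neq i$), and the case analysis on how $C_{l_1}$ and $C_{l_2}$ share structure (disjoint cycles joined by a path, versus two cycles sharing a path) will need to be handled carefully. I would therefore organize the argument so that the contradiction depends only on the eigenvalue equation at vertices of $C_{l_i}$ together with the global bound $\lambda_1 \geqslant n-k+1$, keeping the dependence on the precise bicyclic shape to a minimum and deferring the finer structural distinctions to where they are genuinely needed.
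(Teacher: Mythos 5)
There is a genuine gap: your argument never uses the extremality of $\Gamma$, and the step where extremality is needed is exactly the step you leave unproved. The paper propagates the zeros outward from the cycle through all of $V(B)$ not by the eigenvalue equation but by the switching lemma (Lemma \ref{lem:perturbation}): if some $v_r$ at distance $1$ from $C_{l_1}$ in $B$ had $x_{v_r}\neq 0$, reversing the signs of a positive edge $v_rv_q$ and a negative edge $v_rv_p$, with $v_p,v_q$ on the cycle and hence carrying zero entries, would produce another signed complete graph whose negative edges induce a bicyclic graph and whose index is strictly larger, contradicting the choice of $B$; iterating over distance classes gives $x_v=0$ on all of $V(B)$. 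Only then does the eigenvalue equation enter, applied at a vertex $v_m\notin V(B)$ of maximal entry, yielding $\lambda_1 x_{v_m}\leqslant (n-k)x_{v_m}$ and contradicting $\lambda_1\geqslant n-k+1$.

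Your substitute for that propagation --- summing or subtracting eigenvalue equations over cycle vertices and arguing that $\lambda_1$ is ``large relative to these bounded local contributions'' --- does not go through under the stated hypotheses. Writing the equation at $v$ as $(\lambda_1+1)x_v=S-2\sum_{u\in N_B(v)}x_u$, the local term $2\sum_{u\in N_B(v)}x_u$ may involve up to $k-1$ summands (a cycle vertex can carry many pendants), so it is only bounded by roughly $2k\|\mathbf{x}\|_\infty$, whereas $\lambda_1+1\geqslant n-k+2$ and the hypothesis is merely $n\geqslant k+20$; for large $k$ these quantities are not comparable and no contradiction falls out of size considerations alone. Concretely, even in the favourable case $S=0$ your system reduces to $2A(B)\mathbf{x}=-(\lambda_1+1)\mathbf{x}$, i.e.\ to excluding $-(\lambda_1+1)/2$ as an eigenvalue of $A(B)$; since the least eigenvalue of a bicyclic graph with $k$ edges is only bounded below by $-\sqrt{k+2}$, this exclusion fails once $k$ is large, and the case $S\neq 0$ is not addressed at all. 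Without invoking the maximality of $\Gamma$ through Lemma \ref{lem:perturbation} (or some equally strong input), the vanishing of $\mathbf{x}$ on $V(B)$ is not established, and the proof is incomplete.
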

\begin{proof1}
Without loss of generality, we consider the cycle $C_{l_1}$. Let $N_{B}^{r}(C_{l_1})$ be the set of vertices at distance $r$ from vertices of cycle $C_{l_1}$ in $B$. By the contrary, assume that $x_{v_i}=0$ for any $v_i$ in $V(C_{l_1})$.

We will show that $x_{v_r}=0$ for any $v_r\in V(B)$. Since $B$ is a bicyclic graph, we have $N_{B}^{1}(C_{l_1})$ is not empty. For any $v_r\in N_{B}^{1}(C_{l_1})$, let $v_p$ be a vertex in $V(C_{l_1})$ adjacent to $v_{r}$ in $B$ and $v_q$ be a vertex in $V(C_{l_1})$ not adjacent to $v_{r}$ in $B$. If $x_{v_r}\neq0$, since $x_{v_p}=x_{v_q}=0$, then we can construct a new signed graph $\Gamma'$ obtained from $\Gamma$ by reversing the sign of the positive edge $v_rv_q$ and the negative edge $v_rv_p$ whose negative edges also form a bicyclic graph $B'$, then by Lemma \ref{lem:perturbation}, $\lambda_1(\Gamma)<\lambda_1(\Gamma')$, which is a contradiction. Hence, we have $x_{v_r}=0$ for any $v_r\in N_{B}^{1}(C_{l_1})$. If $N_{B}^{2}(C_{l_1})=\emptyset$, then $V(B)=V(C_{l_1})\cup N_{B}^{1}(C_{l_1})$, and then $x_{v_r}=0$ for any $v_r\in V(B)$.

Next, we will only consider the case $N_{B}^{2}(C_{l_1})$ is not empty. For any $v_r\in N_{B}^{2}(C_{l_1})$, there is always a vertex $v_p$ in $N_{B}^{1}(C_{l_1})$ that is adjacent to $v_r$ in $B$ and there is also a vertex $v_q$ in $V(C_{l_1})$ that is not adjacent to $v_{r}$ in $B$. If $x_{v_r}\neq0$, since $v_p=v_q=0$, then we can construct a new signed graph $\Gamma'$ obtained from $\Gamma$ by reversing the sign of the positive edge $v_rv_q$ and the negative edge $v_rv_p$ whose negative edges also form a bicyclic graph $B'$, then by Lemma \ref{lem:perturbation}, $\lambda_1(\Gamma)<\lambda_1(\Gamma')$, which is a contradiction. Thus $x_{v_r}=0$ for any $v_r\in N_{B}^{2}(C_{l_1})$. Since $B$ is connected, by the same argument, we can prove that $x_{v_r}=0$, for any $v_r$ in $V(B)$.

Then we always have the components of $\mathbf{x}$ corresponding to the vertices in $V(B)$ are all zero. If $V(B)=V(\Gamma)$, then $\mathbf{x}=\mathbf{0}^T$, which is a  contradiction. If $V(B)\neq V(\Gamma)$, then $V(\Gamma)\backslash V(B)$ is not empty.
Let $x_{v_m}=\max\{x_{v_i}:x_{v_i}\in V(\Gamma)\backslash V(B)\}$. We assume that $x_{v_m}>0$ (otherwise, consider $-\mathbf{x}$ instead of $\mathbf{x}$). By the eigenvalue equation for $v_m$, we have
$$\lambda_1 x_{v_m}=\sum_{v_i\in V(\Gamma)\backslash\{v_m\}}x_{v_i}=\sum_{v_i\in V(\Gamma)\backslash \{V(B)\cup \{v_m\}\}}x_{v_i}\leqslant |V(\Gamma)\backslash \{V(B)\cup \{v_m\}\}| x_{v_m}\leqslant (n-k)x_{v_m},$$ which the second equality follows from $x_{v_r}=0$, for any $v_r$ in $V(B)$. Hence, we have $\lambda_1\leqslant n-k$, which is a contradiction to $\lambda_1\geqslant n-k+1$. Furthermore, $x_{v_r}\neq0$ for some vertex $v_{r}\in V(C_{l_1})$. Similarly, $x_{v_r}\neq0$ for some vertex $v_{r}\in V(C_{l_2})$.
\end{proof1}

\begin{lemma}$l_1\leqslant l_2\leqslant4$.
\end{lemma}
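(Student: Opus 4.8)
I would argue by contradiction, supposing $l_2\geqslant 5$ and manufacturing from $\Gamma$ a signed complete graph $\Gamma'$ whose negative edges still induce a bicyclic graph with $k$ edges but whose index is strictly larger, contradicting the maximality of $B$. The engine is a cycle-shortening move built from Lemma~\ref{lem:perturbation}. Take three consecutive vertices $a,b,c$ on $C_{l_2}$ with $b$ of degree $2$ in $B$; since $C_{l_2}$ is an induced cycle of length at least $4$, the pair $ac$ is a non-edge of $B$ and hence a positive edge of $\Gamma$. Reversing this positive edge $ac$ together with one of the two negative cycle edges $bc$ or $ab$ (common vertex $c$, respectively $a$) turns $ac$ negative, detaches $b$ as a pendant, and leaves a bicyclic negative graph with $k$ edges; by Lemma~\ref{lem:perturbation} the index does not decrease, and it strictly increases as soon as one of the sign hypotheses is strict.

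Writing $\alpha=x_a$, $\beta=x_b$, $\gamma=x_c$, a short inspection of the four inequalities governing the two admissible moves shows that neither move improves the index precisely when $b$ is a strict local extremum of $\mathbf{x}$ along $C_{l_2}$ whose two neighbours carry the matching sign: either $\beta>\alpha$ and $\beta>\gamma$ with $\alpha,\gamma<0$, or $\beta<\alpha$ and $\beta<\gamma$ with $\alpha,\gamma>0$. Hence, if $\Gamma$ admits no index-increasing move, every degree-$2$ vertex of $C_{l_2}$ must be such an extremum. Since two adjacent vertices cannot simultaneously be strict local maxima, nor simultaneously strict local minima, the entries of $\mathbf{x}$ would then have to alternate strictly in sign around $C_{l_2}$, the maxima being positive and the minima negative. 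It therefore suffices to exhibit two adjacent degree-$2$ vertices of $C_{l_2}$ whose entries have the same sign.

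To this end I would use the identity $A(\Gamma)=(J-I)-2A_B$, where $J$ is all-ones and $A_B$ is the adjacency matrix of $B$, so that the eigenvalue equation becomes $(\lambda_1+1)\mathbf{x}=S\,\mathbf{1}-2A_B\mathbf{x}$ with $S=\sum_w x_w$; in particular every vertex outside $V(B)$ has the common value $c=S/(\lambda_1+1)$. First I would rule out $S=0$: otherwise $\mathbf{x}$ restricted to $V(B)$ would be an eigenvector of $B$ for the eigenvalue $-(\lambda_1+1)/2$, which is impossible because $(\lambda_1+1)/2$ exceeds the spectral radius of $B$. Indeed the Rayleigh quotient of $\mathbf{1}$ gives $\lambda_1\geqslant (n-1)-4k/n$, while $B$ is connected with $k-1$ vertices and $k$ edges, so its spectral radius is at most $\sqrt{k+2}$, and for $n\geqslant k+20$, $k\geqslant 15$ one checks $(\lambda_1+1)/2>\sqrt{k+2}$. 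Thus $S\neq 0$, and replacing $\mathbf{x}$ by $-\mathbf{x}$ we may assume $c>0$. Finally, for a degree-$2$ cycle vertex $v$ with neighbours $p,q$ one has $x_v=c-2(x_p+x_q)/(\lambda_1+1)$, and the denominator $\lambda_1+1\geqslant n-k+2$ is large while $\|\mathbf{x}\|$ is controlled. The hard part of the argument, and where the hypotheses $n\geqslant k+20$ and $k\geqslant 15$ are really spent, is the quantitative estimate showing that this correction term cannot push a degree-$2$ cycle entry down to a non-positive value; granting it, all degree-$2$ vertices of $C_{l_2}$ are positive, and since $l_2\geqslant 5$ forces two of them to be adjacent we obtain two adjacent same-sign entries, contradicting the forced alternation. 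This yields $l_2\leqslant 4$, and $l_1\leqslant l_2\leqslant 4$ follows from the convention $l_1\leqslant l_2$.
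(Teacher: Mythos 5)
Your opening is a reasonable alternative strategy, and its first half is essentially sound: the case analysis of the two chord flips at a degree-$2$ vertex $b$ of $C_{l_2}$ does show that if neither flip strictly increases the index then either $x_a=x_b=x_c=0$ (a case your characterization omits, though it could be dispatched as in Lemma \ref{Lem:Cl1Cl2}) or $b$ is a strict local extremum whose two cycle-neighbours carry the opposite sign. The second half, however, has two genuine gaps. First, the step you explicitly defer --- that the correction $2(x_p+x_q)/(\lambda_1+1)$ cannot push a degree-$2$ cycle entry from $c=S/(\lambda_1+1)$ down to a non-positive value --- is not a routine estimate under the paper's hypotheses. The relation $x_v=c-2(A_B\mathbf{x})_v/(\lambda_1+1)$ controls $\|\mathbf{x}\|_\infty$ in terms of $|c|$ only when $2\Delta(B)<\lambda_1+1$; here $\lambda_1+1$ need only be about $n-k+2\geqslant 22$ while the maximum degree of $B$ can be of order $k$, so for large $k$ the entries on $V(B)$ (in particular on cycle vertices carrying large pendant stars, which may be the neighbours $p,q$) can be many times $|c|$ and of either sign. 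So ``all degree-$2$ cycle vertices are positive'' is unsupported, and it is not clear it is even true in the stated generality. Second, even granting it, your endgame requires two \emph{adjacent} degree-$2$ vertices on $C_{l_2}$, and $l_2\geqslant 5$ does not force their existence: pendant trees may be attached at a vertex cover of the cycle (for instance at alternate vertices of a $6$-cycle), leaving the degree-$2$ vertices pairwise non-adjacent.

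The paper avoids both difficulties with a purely local argument. Starting from a vertex $v_1$ of $C_{l_2}$ with $x_{v_1}>0$ (supplied by Lemma \ref{Lem:Cl1Cl2}), it applies Lemma \ref{lem:perturbation} to a short chain of chord flips --- $v_1v_3$ against $v_1v_{l_2}$, $v_1v_{l_2-1}$ against $v_1v_2$, then $v_2v_{l_2}$ and $v_{l_2}v_2$ against cycle edges, then $v_{l_2}v_3$ against $v_{l_2}v_{l_2-1}$, and finally $v_2v_{l_2-1}$ against $v_2v_3$ --- each step forcing a strict inequality among $x_{v_2},x_{v_3},x_{v_{l_2-1}},x_{v_{l_2}}$ until the accumulated inequalities are incompatible. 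No eigenvector estimates, no normalization of $S$, and no assumptions on vertex degrees are needed. You should either adopt such a local chain or supply a complete proof of your quantitative claim together with a treatment of the case in which no two degree-$2$ vertices of $C_{l_2}$ are adjacent.
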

\begin{proof1}
Since $l_1\leqslant l_2$, we suffice to show $l_2\leqslant4$. Suppose to the contrary that $l_2\geqslant 5$. Let $C_{l_2}=v_1v_2\cdots v_{l_2-1}v_{l_2}v_1$. By Lemma \ref{Lem:Cl1Cl2}, there is a vertex $v_i\in V(C_{l_2})$ such that
 $x_{v_i}\neq 0$. Without loss of generality, suppose $x_{v_1}>0$ (otherwise, consider $-\mathbf{x}$ instead of $\mathbf{x}$).

If $x_{v_3}\leqslant x_{v_{l_2}}$ (resp. $x_{v_{l_2-1}}\leqslant x_{v_{2}}$), then we can construct a new signed graph $\Gamma'$ from $\Gamma$ by reversing the sign of the positive edge $v_1v_3$ (resp. $v_1v_{l_2-1}$) and the negative edge $v_1v_{l_2}$ (resp. $v_1v_2$) whose negative edges also form a bicyclic graph $B'$, then by Lemma \ref{lem:perturbation}, $\lambda_1(\Gamma)<\lambda_1(\Gamma')$, which is a contradiction. Thus we have $x_{v_{l_2}}<x_{v_3}$ and $x_{v_2}<x_{v_{l_2-1}}$. If $x_{v_{2}}\geqslant 0$ (resp. $x_{v_{l_2}}\geqslant 0$), then we construct a new signed graph $\Gamma'$ from $\Gamma$ by reversing the sign of the positive edge $v_2v_{l_2}$ (resp. $v_{l_2}v_2$) and the negative edge $v_2v_3$ (resp. $v_{l_2}v_{l_2-1}$) whose negative edges also form a bicyclic graph $B'$, then by Lemma \ref{lem:perturbation}, $\lambda_1(\Gamma)<\lambda_1(\Gamma')$, which is a contradiction. Thus we have
$x_{v_{2}}<0$ and $x_{v_{l_2}}<0$.

If $x_{v_{l_2-1}}\leqslant  x_{v_{3}}$, then we can construct a new signed graph $\Gamma'$ obtained from $\Gamma$ by reversing the sign of the positive edge $v_{l_2}v_3$ and the negative edge $v_{l_2}v_{l_2-1}$ whose negative edges also form a bicyclic graph $B'$, by Lemma \ref{lem:perturbation}, we have $\lambda_1(\Gamma)<\lambda_1(\Gamma')$, which is a contradiction. Thus we have $x_{v_3}<x_{v_{l_2-1}}$, and then we can construct a new signed graph $\Gamma'$ obtained from $\Gamma$ by reversing the sign of the positive edge $v_2v_{l_2-1}$ and the negative edge $v_2v_3$ whose negative edges also form a bicyclic graph $B'$, by Lemma \ref{lem:perturbation}, we have $\lambda_1(\Gamma)<\lambda_1(\Gamma')$, which is a contradiction.
Hence, we have $l_2\leqslant 4$, and then $l_1\leqslant l_2\leqslant4$.
\end{proof1}

Let $\hat{B}$ be the base of a bicyclic graph $B$, which is the (unique) minimal bicyclic subgraph of $B$. 
Let bicyclic graph $\hat{\theta}(l,k,m)$($1\leqslant l\leqslant k\leqslant m$) be obtained from three pairwise internal disjoint paths $v_1u_1\cdots u_{k-1}v_{l+1}$ with length $k$, $v_1v_2\cdots v_{l}v_{l+1}$ with length $l$ and $v_1\omega_1\cdots\omega_{m-1}v_{l+1}$ with length $m$ from a vertex $v_1$ to a vertex $v_{l+1}$ (see Figure \ref{fig:hatB1hatB2}).
\begin{figure}[H]
\centering
\includegraphics[scale=0.69]{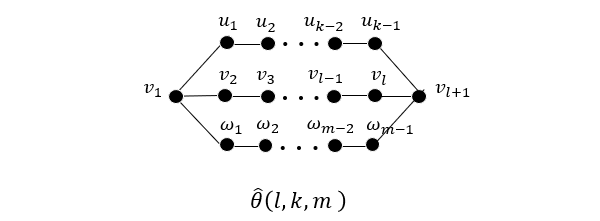}
\caption{Graph $\hat{\theta}(l,k,m)$.}
\label{fig:hatB1hatB2}
\end{figure}

\begin{proof of 1.1} Let $B$ be a bicyclic graph such that $(K_n,B)$ achieves the maximum index. We will prove that $B$ is isomorphic to $\theta_2(0,k-5)$ (see Figure \ref{fig:B11}).
Since $l_1\leqslant4$, we divide the proof into two cases.

\noindent\underline{\textbf{Case 1.}} $l_1=4$

\noindent\textbf{Claim 1.} $\hat{B}\cong\hat{\theta}(2,2,2)$

Suppose $C_{l_1}=v_1v_2v_3v_4v_1$. Assume that $v_1v_p\in E(\hat{B})$, where $v_p\in V(\hat{B})\backslash V(C_{l_1})$. If $v_p$ is adjacent to $v_3$ in $\hat{B}$, it is obvious that $\hat{B}\cong\hat{\theta}(2,2,2)$.
Suppose that $v_p$ is not adjacent to $v_3$ in $\hat{B}$. Since $l_2\geqslant l_1$, $v_p$ is not adjacent to $v_2$ and $v_4$. There is another vertex $v_q$ in $V(\hat{B})$ that is adjacent to $v_p$ in $\hat{B}$. Suppose that $v_q$ is not adjacent to $v_4$ (otherwise, we exchange the subscripts $v_2$ and $v_4$). Without loss of generality, suppose $x_{v_4}\geqslant 0$ (otherwise, consider $-\mathbf{x}$ instead of $\mathbf{x}$).

If $x_{v_2}<x_{v_3}$ (resp. $x_{v_q}<x_{v_3}$), then we can construct a new signed graph $\Gamma'$ from $\Gamma$ by reversing the sign of the positive edge $v_4v_2$ (resp. $v_4v_q$) and the negative edge $v_4v_3$ (resp. $v_4v_3$) whose negative edges also form a bicyclic graph $B'$, then by Lemma \ref{lem:perturbation}, $\lambda_1(\Gamma)<\lambda_1(\Gamma')$, which is a contradiction.
Thus $x_{v_3}\leqslant x_{v_2}$ and $x_{v_3}\leqslant x_{v_q}$.
If $x_{v_1}>0$ (resp. $x_{v_p}>0$), then we can construct a new signed graph $\Gamma'$ from $\Gamma$ by reversing the sign of the positive edge $v_1v_3$ (resp. $v_pv_3$) and the negative edge $v_1v_2$ (resp. $v_pv_q$) whose negative edges also form a bicyclic graph $B'$, then by Lemma \ref{lem:perturbation}, $\lambda_1(\Gamma)<\lambda_1(\Gamma')$, which is a contradiction. Thus we have $x_{v_1}\leqslant 0$ and $x_{v_p}\leqslant 0$.

Next we will prove that $x_{v_1}=x_{v_2}=x_{v_3}=x_{v_4}=0$.
Since $x_{v_4}\geqslant0\geqslant x_{v_1}$ and $x_{v_p}\leqslant 0$, we can construct a new signed graph $\Gamma'$ obtained from $\Gamma$ by reversing the sign of the positive edge $v_pv_4$ and the negative edge $v_pv_1$ whose negative edges also form a bicyclic graph $B'$, by Lemma \ref{lem:perturbation}, we have $\lambda_1(\Gamma)\leqslant\lambda_1(\Gamma')$. If one of the inequalities $x_{v_4}\geqslant x_{v_1}$ and $x_{v_p}\leqslant 0$ is strict, then we have $\lambda_1(\Gamma)<\lambda_1(\Gamma')$, which is a contradiction. Thus
$x_{v_4}=x_{v_1}=x_{v_p}=0$. If $x_{v_3}\neq0$ (resp. $x_{v_2}\neq0$), then we can construct a new signed graph $\Gamma'$ from $\Gamma$ by reversing the sign of the positive edge $v_3v_1$ (resp. $v_2v_4$) and the negative edge $v_3v_4$ (resp. $v_2v_1$) whose negative edges also form a bicyclic graph $B'$, then by Lemma \ref{lem:perturbation}, $\lambda_1(\Gamma)<\lambda_1(\Gamma')$, which is also a contradiction. Thus $x_{v_2}=x_{v_3}=0$, and then $x_{v_1}=x_{v_2}=x_{v_3}=x_{v_4}=0$, which is a contradiction by Lemma \ref{Lem:Cl1Cl2}.

Let $N^r_B(\hat{B})$ be the set of vertices at distance $r$ from the vertices of $\hat{B}$ in $B$. Since $\hat{B}\cong\hat{\theta}(2,2,2)$, without loss of generality, suppose $C_{l_1}=v_1v_2v_3u_1v_1$, then  $C_{l_2}=v_1v_2v_3\omega_1v_1$ or $C_{l_2}=v_1u_1v_3\omega_1v_1$ (see Figure \ref{fig:hatB1hatB2}).

\noindent\textbf{Claim 2.} $N^2_B(\hat{B})=\emptyset$ and $N^1_B(\hat{B})$ is an independent set.

By the contrary, suppose that $v_q\in N^2_B(\hat{B})$. Without loss of generality, we consider the cycle $C_{l_1}$. Suppose that $v_1v_p$, $v_pv_q\in E(B)\backslash E(\hat{B})$, where $v_p\in N^1_B(\hat{B})$. Then we know that $v_p$ is not adjacent to $v_3$, $v_p$ is not adjacent to $u_1$, and $v_q$ is not adjacent to $u_1$. By a similar argument to Claim 1, we can get a same contradiction. Then $v_p$ is pendant vertex in $B$. Hence, $N^2_B(C_{l_1})=\emptyset$. Similarly, $N^2_B(C_{l_2})=\emptyset$. Then we have $N^2_B(\hat{B})=\emptyset$.

Since $\hat{B}\cong\hat{\theta}(2,2,2)$, $\hat{B}$ consists of two vertex-induced cycles of length $4$. Then we know that $N^1_B(\hat{B})$ is an independent set.

\noindent\textbf{Claim 3.} For any two non-adjacent vertices of $C_{l_i}$ where $i=1,2$, there is at most one vertex of them that has neighbors in $V(B)\backslash V(\hat{B})$.

Without loss of generality, we consider the cycle $C_{l_1}$. Suppose to the contrary that $v_1v_p,v_3v_q\in E(B)\backslash E(\hat{B})$, where $v_p,v_q\in V(B)\backslash V(\hat{B})$. Suppose $x_{u_1}\geqslant 0$ (otherwise, consider $-\mathbf{x}$ instead of $\mathbf{x}$).

If $x_{v_p}<x_{v_3}$ (resp. $x_{v_q}<x_{v_1}$), then we can construct a new signed graph $\Gamma'$ from $\Gamma$ by reversing the sign of the positive edge $u_1v_p$ (resp. $u_1v_q$) and the negative edge $u_1v_3$ (resp. $u_1v_1$) whose negative edges also form a bicyclic graph $B'$, then by Lemma \ref{lem:perturbation}, $\lambda_1(\Gamma)<\lambda_1(\Gamma')$, which is a contradiction. Thus $ x_{v_3}\leqslant x_{v_p}$ and $x_{v_1}\leqslant x_{v_q} $. If $x_{v_1}>0$ (resp. $x_{v_3}>0$), then we can construct a new signed graph $\Gamma'$ from $\Gamma$ by reversing the sign of the positive edge $v_1v_3$ (resp. $v_3v_1$) and the negative edge $v_1v_p$ (resp. $v_3v_q$) whose negative edges also form a bicyclic graph $B'$, then by Lemma \ref{lem:perturbation}, $\lambda_1(\Gamma)<\lambda_1(\Gamma')$, which is a contradiction. Thus $x_{v_1}\leqslant 0$ and $x_{v_3}\leqslant 0$. If $x_{v_p}<x_{v_q}$ (resp. $x_{v_p}>x_{v_q}$), then a new signed graph $\Gamma'$ can be obtained from $\Gamma$ by reversing the sign of the positive edge $v_1v_q$ (resp. $v_3v_p$) and the negative edge $v_1v_p$ (resp. $v_3v_q$) whose negative edges also form a bicyclic graph $B'$, by Lemma \ref{lem:perturbation}, we have $\lambda_1(\Gamma)<\lambda_1(\Gamma')$, which is a contradiction. Thus $x_{v_p}=x_{v_q}$.

Next we will prove that $x_{v_1}=x_{v_2}=x_{v_3}=x_{u_1}=0$. Since $x_{v_1}\leqslant 0$ and $x_{v_p}=x_{v_q}$, we can construct a new signed graph $\Gamma'$ from $\Gamma$ by reversing the sign of the positive edge $v_1v_q$ and the negative edge $v_1v_p$ whose negative edges also form a bicyclic graph $B'$, then by Lemma \ref{lem:perturbation}, $\lambda_1(\Gamma)\leqslant\lambda_1(\Gamma')$.  If $x_{v_1}<0$, then $\lambda_1(\Gamma)<\lambda_1(\Gamma')$, which is a contradiction. Thus we have $x_{v_1}=0$. Similarly, we have $x_{v_3}=0$. Since $x_{v_1}=x_{v_3}=0$, if $x_{v_p}\neq0$, then we can construct a new signed graph $\Gamma'$ from $\Gamma$ by reversing the sign of the positive edge $v_pv_3$ and the negative edge $v_pv_1$ whose negative edges also form a bicyclic graph $B'$, then by Lemma \ref{lem:perturbation}, $\lambda_1(\Gamma)<\lambda_1(\Gamma')$, which is a contradiction. Thus $x_{v_p}=0$. If $x_{u_1}\neq0$, then we can construct a new signed graph $\Gamma'$ from $\Gamma$ by reversing the sign of the positive edge $u_1v_p$ and the negative edge $u_1v_1$ whose negative edges also form a bicyclic graph $B'$, then by Lemma \ref{lem:perturbation}, $\lambda_1(\Gamma)<\lambda_1(\Gamma')$, which is a contradiction. Thus $x_{u_1}=0$. Similarly, we have $x_{v_2}=0$. Then $x_{v_1}=x_{v_2}=x_{v_3}=x_{u_1}=0$, by Lemma \ref{Lem:Cl1Cl2}, which is a contradiction.

By a similar argument, we can deduce that there is at most one vertex in $\{v_2,u_1\}$ that has neighbors in $V(B)\backslash V(\hat{B})$. Moreover, considering the cycle $C_{l_2}$, we can also deduce that there is at most one vertex in $\{v_1,v_3\}$ and at most one vertex in $v_2,\omega_1$ or $\{u_1,\omega_1\}$ that has neighbors in $V(B)\backslash V(\hat{B})$.

\noindent\textbf{Claim 4.} $B\cong \theta_{1}(s,t)$ for some $s,t\geq0$ and $s+t=k-6$.

Since $\hat{B}$ consists of $C_{l_1}$ and $C_{l_2}$, by Claim 3, at most one vertex of $\{u_1,v_2,\omega_1\}$ has neighbors in $V(B)\backslash V(\hat{B})$ and at most one vertex of $\{v_1,v_3\}$ has neighbors in $V(B)\backslash V(\hat{B})$. By Claim 2, we have $B\cong \theta_{1}(s,t)$ (see Figure \ref{fig:B11}) for some $s,t\geqslant0$ and $s+t=k-6$.

Combining Lemma \ref{lem:BBBst1} and Lemma \ref{lem:BBBmax}, $\lambda_1((K_n,B))$ does not attain the maximum while $B\cong \theta_{1}(s,t)$. Then we have a contradiction.

\noindent\underline{\textbf{Case 2.}} $l_1=3$

\noindent\textbf{Claim 5.} $\hat{B}\cong\hat{\theta}(1,2,2)$

Suppose $C_{l_1}=v_1v_2v_3v_1$. Assume that $v_1v_p\in E(\hat{B})$, where $v_p\in V(\hat{B})\backslash V(C_{l_1})$.
If $v_p$ is adjacent to $v_2$ or $v_3$ in $\hat{B}$, it is obvious that $\hat{B}\cong\hat{\theta}(1,2,2)$. Suppose that $v_p$ is not adjacent to $v_2$ and $v_3$ in $\hat{B}$. There is another vertex $v_q$ in $V(\hat{B})$ that is adjacent to $v_p$ in $\hat{B}$. Suppose that $v_q$ is not adjacent to $v_2$ (otherwise, exchange the subscripts $v_3$ and $v_2$). Without loss of generality, suppose $x_{v_2}\geqslant 0$ (otherwise, consider $-\mathbf{x}$ instead of $\mathbf{x}$).

If $x_{v_p}<x_{v_1}$, then we can construct a new signed graph $\Gamma'$ from $\Gamma$ by reversing the sign of the positive edge $v_2v_p$ and the negative edge $v_2v_1$ whose negative edges also form a bicyclic graph $B'$, then by Lemma \ref{lem:perturbation}, $\lambda_1(\Gamma)<\lambda_1(\Gamma')$, which is a contradiction. Thus $x_{v_1}\leqslant x_{v_p}$.
If $x_{v_3}<0$, then we can construct a new signed graph $\Gamma'$ from $\Gamma$ by reversing the sign of the positive edge $v_3v_p$ and the negative edge $v_3v_1$ whose negative edges also form a bicyclic graph $B'$, then by Lemma \ref{lem:perturbation}, $\lambda_1(\Gamma)<\lambda_1(\Gamma')$, which is a contradiction. Thus $x_{v_3}\geqslant 0$.
If $x_{v_p}<x_{v_2}$, then we can construct a new signed graph $\Gamma'$ from $\Gamma$ by reversing the sign of the positive edge $v_3v_p$ and the negative edge $v_3v_2$ whose negative edges also form a bicyclic graph $B'$, then by Lemma \ref{lem:perturbation}, $\lambda_1(\Gamma)<\lambda_1(\Gamma')$, which is a contradiction. Thus $x_{v_p}\geqslant x_{v_2}\geqslant 0$.
If $x_{v_q}>0$, then we can construct a new signed graph $\Gamma'$ from $\Gamma$ by reversing the sign of the positive edge $v_qv_2$ and the negative edge $v_qv_p$ whose negative edges also form a bicyclic graph $B'$, then by Lemma \ref{lem:perturbation}, $\lambda_1(\Gamma)<\lambda_1(\Gamma')$, which is a contradiction. Thus $x_{v_q}\leqslant 0$.
Then we have $x_{v_3}\geqslant 0\geqslant x_{v_q}$.

Next we will prove that $x_{v_1}=x_{v_2}=x_{v_3}=0$.
Since $x_{v_3}\geqslant 0\geqslant x_{v_q}$ and $x_{v_2}\geqslant 0$, we can construct a new signed graph $\Gamma'$ obtained from $\Gamma$ by reversing the sign of the positive edge $v_2v_q$ and the negative edge $v_2v_3$ whose negative edges also form a bicyclic graph $B'$, by Lemma \ref{lem:perturbation}, we have $\lambda_1(\Gamma)\leqslant\lambda_1(\Gamma')$. If one of the inequalities $x_{v_3}\geqslant x_{v_q}$ and $x_{v_2}\geqslant 0$ is strict, then we have $\lambda_1(\Gamma)<\lambda_1(\Gamma')$, which is a contradiction. Thus $x_{v_3}=x_{v_q}=x_{v_2}=0$. If $x_{v_1}\neq 0$, then we can construct a new signed graph $\Gamma'$ from $\Gamma$ by reversing the sign of the positive edge $v_2v_q$ and the negative edge $v_2v_1$ whose negative edges also form a bicyclic graph $B'$, then by Lemma \ref{lem:perturbation}, $\lambda_1(\Gamma)<\lambda_1(\Gamma')$, which is a contradiction. Hence we have $x_{v_1}=0$ and then $x_{v_1}=x_{v_2}=x_{v_3}=0$, which is a contradiction by Lemma \ref{Lem:Cl1Cl2}.

Let $N^r_B(\hat{B})$ be the set of vertices at distance $r$ from the vertices of $\hat{B}$ in $B$. Since $\hat{B}\cong\hat{\theta}(1,2,2)$, without loss of generality, suppose $V(C_{l_1})=\{v_1,v_2,u_1\}$ and  $V(C_{l_2})=\{v_1,v_2,\omega_1\}$ (see Figure \ref{fig:hatB1hatB2}).

\noindent\textbf{Claim 6.} $N^2_B(\hat{B})=\emptyset$ and $N^1_B(\hat{B})$ is an independent set.

By the contrary, suppose that $v_q\in N^2_B(\hat{B})$. Without loss of generality, we consider the cycle $C_{l_1}$. Suppose that $v_1v_p,v_pv_q\in E(B)$, where $v_p\in N^1_B(\hat{B})$. Obviously, $v_1v_p,v_pv_q\notin E(\hat{B})$. Then we have $v_p$ is not adjacent to $v_2$, and $v_q$ is not adjacent to $v_2$. By a similar argument to Claim 5, we can get a same contradiction. Then $v_p$ is pendant vertex in $B$. Hence, $N^2_B(\hat{B})=\emptyset$.

Since $\hat{B}\cong\hat{\theta}(1,2,2)$, $\hat{B}$ consists of two vertex-induced cycles of length $3$. Then we know that $N^1_B(\hat{B})$ is an independent set.

\noindent\textbf{Claim 7.} At most one vertex of $C_{l_i}$ has neighbors in $V(B)\backslash V(\hat{B})$, where $i=1,2$.

Without loss of generality, we consider the cycle $C_{l_1}$. Suppose to the contrary that $v_1v_p,v_2v_q\in E(B)$, where $v_p,v_q\in V(B)\backslash V(\hat{B})$. Suppose $x_{u_1}\geqslant0$ (otherwise, consider $-\mathbf{x}$ instead of $\mathbf{x}$).

If $x_{v_q}<x_{v_1}$ (resp. $x_{v_p}<x_{v_2}$), then we can construct a new signed graph $\Gamma'$ from $\Gamma$ by reversing the sign of the positive edge $u_1v_q$ (resp. $u_1v_p$) and the negative edge $u_1v_1$ (resp. $u_1v_2$) whose negative edges also form a bicyclic graph $B'$, then by Lemma \ref{lem:perturbation}, $\lambda_1(\Gamma)<\lambda_1(\Gamma')$, which is a contradiction. Thus $x_{v_1}\leqslant x_{v_q}$ and $x_{v_2}\leqslant x_{v_p}$. And then, if $x_{v_p}<0$ (resp. $x_{v_q}<0$), then we can construct a new signed graph $\Gamma'$ from $\Gamma$ by reversing the sign of the positive edge $v_pv_q$ (resp. $v_qv_p$) and the negative edge $v_pv_1$ (resp. $v_qv_2$) whose negative edges also form a bicyclic graph $B'$, then by Lemma \ref{lem:perturbation}, $\lambda_1(\Gamma)<\lambda_1(\Gamma')$, which is a contradiction. Thus $x_{v_p}\geqslant0$ and $x_{v_q}\geqslant0$.

If $x_{v_1}> x_{v_2}$, then a new signed graph $\Gamma'$ can be obtained from $\Gamma$ by reversing the sign of the positive edge $v_pv_2$ and the negative edge $v_pv_1$ whose negative edges also form a bicyclic graph $B'$, by Lemma \ref{lem:perturbation}, we have $\lambda_1(\Gamma)<\lambda_1(\Gamma')$, which is a contradiction. Thus $x_{v_1}\leqslant x_{v_2}$. However, if the inequality $x_{v_1}\leqslant x_{v_2}$ is strict, then we can construct a new signed graph $\Gamma'$ obtained from $\Gamma$ by reversing the sign of the positive edge $v_qv_1$ and the negative edge $v_qv_2$ whose negative edges also form a bicyclic graph $B'$, by Lemma \ref{lem:perturbation}, we have $\lambda_1(\Gamma)<(\Gamma')$, which is a contradiction. Thus $x_{v_1}=x_{v_2}$.
If $x_{v_p}>0$ (resp. $x_{v_q}>0$), then we can construct a new signed graph $\Gamma'$ from $\Gamma$ by reversing the sign of the positive edge $v_pv_2$ (resp. $v_qv_1$) and the negative edge $v_pv_1$ (resp. $v_qv_2$) whose negative edges also form a bicyclic graph $B'$, then by Lemma \ref{lem:perturbation}, $\lambda_1(\Gamma)<\lambda_1(\Gamma')$, which is a contradiction. Thus $x_{v_p}=x_{v_q}=0$.

Next we will prove that $x_{v_1}=x_{v_2}=x_{u_1}=0$.
If $x_{v_1}=x_{v_2}\neq0$, since $x_{v_p}=0$, then we can construct a new signed graph $\Gamma'$ from $\Gamma$ by reversing the sign of the positive edge $v_2v_p$ and the negative edge $v_2v_1$ whose negative edges also form a bicyclic graph $B'$, then by Lemma \ref{lem:perturbation}, $\lambda_1(\Gamma)<\lambda_1(\Gamma')$, which is a contradiction. Thus $x_{v_1}=x_{v_2}=0$. If $x_{u_1}\neq0$, then we can construct a new signed graph $\Gamma'$ from $\Gamma$ by reversing the sign of the positive edge $u_1v_q$ and the negative edge $u_1v_2$ whose negative edges also form a bicyclic graph $B'$, then by Lemma \ref{lem:perturbation}, $\lambda_1(\Gamma)<\lambda_1(\Gamma')$, which is a contradiction. Thus $x_{u_1}=0$. Then we have $x_{v_1}=x_{v_2}=x_{u_1}=0$, by Lemma \ref{Lem:Cl1Cl2}, which is a contradiction. Then there is at most one vertex in $V(C_{l_1})$ has neighbors in $V(B)\backslash V(\hat{B})$. Similarly, there is at most one vertex in $V(C_{l_2})$ has neighbors in $V(B)\backslash V(\hat{B})$.

\noindent\textbf{Claim 8.} $B\cong \theta_2(0,k-5)$

Since $\hat{B}$ consists of $C_{l_1}$ and $C_{l_2}$, by Claim 7, at most one vertex of $\{v_1,v_2,u_1\}$ has neighbors in $V(B)\backslash V(\hat{B})$ and at most one vertex of $\{v_1,v_2,\omega_1\}$ has neighbors in $V(B)\backslash V(\hat{B})$. If  $u_1$ and $\omega_1$ have no neighbors in $V(B)\backslash V(\hat{B})$, it is obvious that $B\cong \theta_2(0,k-5)$, by Claim 6.
Next, we will prove that there is a contradiction if $u_1$ or $\omega_1$ has neighbors in $V(B)\backslash V(\hat{B})$. Without loss of generality, we consider the vertex $u_1$. Suppose that $u_1v_t\in E(B)$, where $v_t\in V(B)\backslash V(\hat{B})$ and $x_{v_t}\geqslant0$ (otherwise we consider $-\mathbf{x}$ instead of $\mathbf{x}$).

If $x_{u_1}>x_{v_2}$, then we can construct a new signed graph $\Gamma'$ from $\Gamma$ by reversing the sign of the positive edge $v_tv_2$ and the negative edge $v_tu_1$ whose negative edges also form a bicyclic graph $B'$, then by Lemma \ref{lem:perturbation}, $\lambda_1(\Gamma)<\lambda_1(\Gamma')$, which is a contradiction. Thus $x_{u_1}\leqslant x_{v_2}$.
If $x_{\omega_1}>0$, then we can construct a new signed graph $\Gamma'$ from $\Gamma$ by reversing the sign of the positive edge $\omega_1u_1$ and the negative edge $\omega_1v_2$ whose negative edges also form a bicyclic graph $B'$, then by Lemma \ref{lem:perturbation}, $\lambda_1(\Gamma)<\lambda_1(\Gamma')$, which is a contradiction. Thus $x_{\omega_1}\leqslant0$. Since $x_{v_t}\geqslant0\geqslant x_{\omega_1}$, if $x_{v_1}<0$, then we can construct a new signed graph $\Gamma'$ from $\Gamma$ by reversing the sign of the positive edge $v_1v_t$ and the negative edge $v_1\omega_1$ whose negative edges also form a bicyclic graph $B'$, then by Lemma \ref{lem:perturbation}, $\lambda_1(\Gamma)<\lambda_1(\Gamma')$, which is a contradiction. Thus $x_{v_1}\geqslant 0$.

If $x_{v_t}>x_{v_2}$, then a new signed graph $\Gamma'$ can be obtained from $\Gamma$ by reversing the sign of the positive edge $\omega_1v_t$ and the negative edge $\omega_1v_2$ whose negative edges also form a bicyclic graph $B'$, by Lemma \ref{lem:perturbation}, we have $\lambda_1(\Gamma)<\lambda_1(\Gamma')$, which is a contradiction. Thus $x_{v_t}\leqslant x_{v_2}$. However, if the inequality $x_{v_t}\leqslant x_{v_2}$ is strict, then we can construct a new signed graph $\Gamma'$ can be obtained from $\Gamma$ by reversing the sign of the positive edge $v_1v_t$ and the negative edge $v_1v_2$ whose negative edges also form a bicyclic graph $B'$, by Lemma \ref{lem:perturbation}, we have $\lambda_1(\Gamma)<\lambda_1(\Gamma')$, which is a contradiction. Thus, $x_{v_t}=x_{v_2}$.

Next we will prove that $x_{\omega_1}=x_{v_1}=x_{v_2}=0$.
If $x_{\omega_1}\neq0$ (resp. $x_{v_1}\neq0$), then we can construct a new signed graph $\Gamma'$ from $\Gamma$ by reversing the sign of the positive edge $\omega_1v_t$ (resp. $v_1v_t$) and the negative edge $\omega_1v_2$ (resp. $v_1v_2$) whose negative edges also form a bicyclic graph $B'$, then by Lemma \ref{lem:perturbation}, $\lambda_1(\Gamma)<\lambda_1(\Gamma')$, which is a contradiction. Thus $x_{\omega_1}=x_{v_1}=0$. If $x_{u_1}\neq0$, then we can construct a new signed graph $\Gamma'$ from $\Gamma$ by reversing the sign of the positive edge $u_1\omega_1$ and the negative edge $u_1v_1$ whose negative edges also form a bicyclic graph $B'$, then by Lemma \ref{lem:perturbation}, $\lambda_1(\Gamma)<\lambda_1(\Gamma')$, which is a contradiction. Thus $x_{u_1}=0$. If $x_{v_t}\neq0$, then we can construct a new signed graph $\Gamma'$ from $\Gamma$ by reversing the sign of the positive edge $v_tv_1$ and the negative edge $v_tu_1$ whose negative edges also form a bicyclic graph $B'$, then by Lemma \ref{lem:perturbation}, $\lambda_1(\Gamma)<\lambda_1(\Gamma')$, which is a contradiction. Thus $x_{v_2}=x_{v_t}=0$. Then we have $x_{\omega_1}=x_{v_1}=x_{v_2}=0$, by Lemma \ref{Lem:Cl1Cl2}, which is a contradiction. Therefore, $u_1$ has no neighbor in $V(B)\backslash V(\hat{B})$. Similarly, $\omega_1$ has the same situation.

By combining Lemma \ref{lem:BBBst1} and Lemma \ref{lem:BBBmax}, we can establish that among the signed complete graphs with $n$ vertices and $k$ negative edges, $(K_n,B)$ has the maximum index if and only if $B$ is isomorphic to $\theta_{2}(0,k-5)$, when $n\geqslant k+20$ and $k\leqslant15$.
\end{proof of 1.1}

\section*{Declaration}
The authors have declared that no competing interest exists.

\appendix
\section{}\label{App:A}

Our goal in this section is to complete the proofs of Lemma \ref{lem:BBBst1} and Lemma \ref{lem:BBBmax}.
Let
\begin{small}
\begin{equation*}
\begin{split}
p(\lambda)=&(s-t-2)\lambda^4+4(s-t-2)\lambda^3-2[(u+1)(s-t-2)+4]\lambda^2\\
&-4[(u+3)(s-t-2)+4]\lambda+14su-7s-14tu+7t-12u+6\\
=&p_1(\lambda)+p_2(\lambda)=P_{1}(\lambda)+P_{2}(\lambda),
\end{split}
\end{equation*}
\end{small}
where
\begin{small}
$$p_1(\lambda)=(s-t-2)\lambda^4+4(s-t-2)\lambda^3-8\lambda^2-16\lambda-7s-14tu-12u,$$ $$p_2(\lambda)=-2(u+1)(s-t-2)\lambda^2-4(u+3)(s-t-2)\lambda+14su+7t+6,$$
$$P_{1}(\lambda)=-2[(u+1)(s-t-2)+4]\lambda^2-4[(u+3)(s-t-2)+4]\lambda-7s-14tu-12u,$$
\end{small}
and
\begin{small}
$$P_{2}(\lambda)=(s-t-2)\lambda^4+4(s-t-2)\lambda^3+14su+7t+6.$$
\end{small}

\begin{proof of 2.1}
Let $\Gamma=(K_n,\theta_{1}(s,t))$, $\Gamma'=(K_n,\theta_{1}(s-1,t+1))$. First we assume that $s\leqslant t+2$. By Lemma \ref{lem:BBB1}, we deduce
\begin{small}
\begin{equation*}
\begin{split}
\varphi(\Gamma',\lambda)-\varphi(\Gamma,\lambda)=8(\lambda+1)^{n-7}p(\lambda)=8(\lambda+1)^{n-7}(p_1(\lambda)+p_2(\lambda)).
\end{split}
\end{equation*}
\end{small}
Let $\lambda_1$ be the index of $\Gamma$, then we have
\begin{small}
\begin{equation*}
\begin{split}
\varphi(\Gamma',\lambda_1)=&8(\lambda_1+1)^{n-7}(p_1(\lambda_1)+p_2(\lambda_1)).
\end{split}
\end{equation*}
\end{small}
By the assumption $s\leqslant t+2$, we will know that $p_1(\lambda)$ is a decreasing function and $p_2(\lambda)$ is an increasing function. Since $\Gamma$ has $(K_{n-3},+)$ as an induced subgraph, by Theorem \ref{thm:interlacing}, we have $\lambda_1\geqslant n-4$. Then we have
$p_1(\lambda_1)\leqslant p_1(n-4)$.
Since $\lambda_1\leq n-1$, we have $p_2(\lambda_1)\leqslant p_2(n-1)$.
Since $k=s+t+6$, $n\geqslant k-1$ and $k\geqslant 15$, we have
\begin{small}
\begin{equation*}
\begin{split}
p_1(\lambda_1)+p_2(\lambda_1)\leqslant& p_1(n-4)+p_2(n-1)\\
=&(s-t-2)(k-15)n^3+(s-t-2)(2k^3+40k^2-158k+119)\\
&-8k^2+64k-128<0.
\end{split}
\end{equation*}
\end{small}
Hence $p_1(\lambda_1)+p_2(\lambda_1)<0$, $\varphi(\Gamma',\lambda_1)<0$ and  then $\lambda_1((K_n,\theta_{1}(s-1,t+1)))>\lambda_1((K_n,\theta_{1}(s,t)))$. Therefore, we have
\begin{small}
\begin{equation*}
\begin{split}
\lambda_1((K_n,\theta_{1}(0,k-6)))>\cdots>\lambda_1((K_n,\theta_{1}(\lfloor\frac{k-6}{2}\rfloor,\lceil\frac{k-6}{2}\rceil)))>\cdots>\lambda_1((K_n,\theta_1(k-6,0))).
\end{split}
\end{equation*}
\end{small}
Thus, $\lambda_1((K_n,\theta_{1}(0,k-6)))\geqslant\lambda_1((K_n,\theta_{1}(s,t)))$ for each pair $s,t\geqslant0$ and $s+t=k-6$.

Next we consider $s>t+2$. Let $\lambda_*$ be the index of $\Gamma'$. Since
\begin{small}
\begin{equation*}
\begin{split}
\varphi(\Gamma,\lambda)-\varphi(\Gamma',\lambda)=-8(\lambda+1)^{n-7}p(\lambda)=-8(\lambda+1)^{n-7}(P_{1}(\lambda)+P_{2}(\lambda)),
\end{split}
\end{equation*}
\end{small}
then we have
\begin{small}
\begin{equation*}
\begin{split}
\varphi(\Gamma,\lambda_*)=-8(\lambda_*+1)^{n-7}(P_{1}(\lambda_*)+P_{2}(\lambda_*)).
\end{split}
\end{equation*}
\end{small}
Since $\Gamma'$ has $(K_{n-3},+)$ as an induced subgraph, by Theorem \ref{thm:interlacing}, we have $\lambda_*\geqslant n-4$. Then we have $P_{2}(\lambda_*)\geqslant P_{2}(n-4)$. Since $\lambda_*\leqslant n-1$, we have $P_{1}(\lambda_*)\geqslant P_{1}(n-1)$.
Since $k=s+t+6$ and $n\geqslant k-1$, we have
\begin{small}
\begin{equation*}
\begin{split}
P_{1}(\lambda_*)+P_{2}(\lambda_*)\geqslant& P_{1}(n-1)+P_{2}(n-4)\\
=&(k-15)(s-t-2)n^3+(2k^2+34k-108)n\\
&+16(s-t-1)u+3s-3t-6>0.
\end{split}
\end{equation*}
\end{small}
Hence $P_{1}(\lambda_*)+P_{2}(\lambda_*)>0$, $\varphi(\Gamma',\lambda_1)<0$ and then $\lambda_*((K_n,\theta_{1}(s,t)))>\lambda_*((K_n,\theta_{1}(s-1,t+1)))$. Therefore, we have
\begin{small}
\begin{equation*}
\begin{split}
\lambda_*((K_n,\theta_{1}(k-6,0)))>\cdots>\lambda_*((K_n,\theta_{1}(\lceil\frac{k-6}{2}\rceil,\lfloor\frac{k-6}{2}\rfloor)))>\cdots>\lambda_*((K_n,\theta_1(0,k-6))).
\end{split}
\end{equation*}
\end{small}
Thus, $\lambda_*((K_n,\theta_{1}(k-6,0)))\geqslant\lambda_*((K_n,\theta_{1}(s,t)))$ for each pair $s,t\geqslant0$ and $s+t=k-6$. Hence the proof is complete.
\end{proof of 2.1}
To simplify representation, let
\begin{small}
\begin{equation*}
\begin{split}
s(\lambda)=&(k-8)\lambda^4+4(n-7)\lambda^3-2(-k^2+kn+12k-14n+10)\lambda^2\\
&-4(-k^2+kn+11k-11n-3)\lambda-2kn-21k-20n+2k^2+12\\
=&s_1(\lambda)+s_2(\lambda),\\
S(\lambda)=&(k-7)\lambda^4+2(n+k-13)\lambda^3-(-k^2+kn+8k-10n+11)\lambda^2\\
&-2(-2k^2+2kn+17k-17n-3)\lambda-2kn-17k-4n+2k^2+9\\
=&S_{1}(\lambda)+S_{2}(\lambda),
\end{split}
\end{equation*}
\end{small}
where
\begin{small}
\begin{equation*}
\begin{split}
s_1(\lambda)=&(k-8)\lambda^4+4(n-7)\lambda^3+2k^2+12,\\ s_2(\lambda)=&-2(-k^2+kn+12k-14n+10)\lambda^2-4(-k^2+kn+11k-11n-3)\lambda\\
&-2kn-21k-20n,\\
S_{1}(\lambda)=&(k-7)\lambda^4+2(n+k-13)\lambda^3+2k^2+9,\\ S_{2}(\lambda)=&-(-k^2+kn+8k-10n+11)\lambda^2-2(-2k^2+2kn+17k-17n-3)\lambda\\
&-2kn-17k-4n.
\end{split}
\end{equation*}
\end{small}

\begin{proof of 2.2}
(i) Suppose that $\Gamma=(K_n,\theta_{1}(0,k-6))$ and $\Gamma'=(K_n,\theta_{2}(0,k-5))$. By Lemma \ref{lem:BBB1} and Lemma \ref{lem:BBB2}, we deduce
\begin{small}
\begin{equation*}
\begin{split}
\varphi(\Gamma',\lambda)-\varphi(\Gamma,\lambda)=-8(\lambda+1)^{n-7}s(\lambda)=-8(\lambda+1)^{n-7}(s_1(\lambda)+s_2(\lambda)).
\end{split}
\end{equation*}
\end{small}
Let $\lambda_1$ be the index of $\Gamma$, then we have
\begin{small}
\begin{equation*}
\begin{split}
\varphi(\Gamma',\lambda_1)=-8(\lambda_1+1)^{n-7}(s_1(\lambda_1)+s_2(\lambda_1)).
\end{split}
\end{equation*}
\end{small}
Since $(K_n,\theta_{1}(0,k-6))$ has $(K_{n-2},+)$ as an induced subgraph, by Theorem \ref{thm:interlacing} we have $\lambda_1\geqslant n-3$, and $k\geqslant15$, $n\geqslant k+20$. Then$s_1(\lambda_1)\geqslant s_1(n-3)$. Since $\lambda_1\leqslant n-1$, we have $s_2(\lambda_1)\geqslant s_2(n-1)$.
Since $k=s+t+6$ and $n\geqslant k-1$, we have
\begin{small}
\begin{equation*}
\begin{split}
s_1(\lambda_1)+s_2(\lambda_1)&\geqslant s_1(n-3)+s_2(n-1)\\
&=(k^2-19k+64)n^3+(2k^3+28k^2-238k+120)n+80k+88>0.
\end{split}
\end{equation*}
\end{small}
Hence $s_1(\lambda_1)+s_2(\lambda_1)>0$, $\varphi(\Gamma',\lambda_1)<0$. And then $\Gamma'$ has a larger index, which means that $\lambda_1((K_n,\theta_{1}(0,k-6)))<\lambda_1((K_n,\theta_{2}(0,k-5)))$.

(ii) Suppose that $\Gamma=(K_n,\theta_{1}(k-6,0))$ and $\Gamma'=(K_n,\theta_{2}(0,k-5))$. By Lemma \ref{lem:BBB1} and Lemma \ref{lem:BBB2}, we deduce
\begin{small}
\begin{equation*}
\begin{split}
\varphi(\Gamma',\lambda)-\varphi(\Gamma,\lambda)=-16(\lambda+1)^{n-7}S(\lambda)=-16(\lambda+1)^{n-7}(S_{1}(\lambda)+S_{2}(\lambda)).
\end{split}
\end{equation*}
\end{small}
Let $\lambda_1$ be the index of $\Gamma$, then we have
\begin{small}
\begin{equation*}
\begin{split}
\varphi(\Gamma',\lambda_1)=-16(\lambda_1+1)^{n-7}(S_{1}(\lambda_1)+S_{2}(\lambda_1)).
\end{split}
\end{equation*}
\end{small}
Since $(K_n,\theta_{1}(k-6,0))$ has $(K_{n-3},+)$ as an induced subgraph, by Theorem \ref{thm:interlacing} we have $\lambda_1\geqslant n-4$, and $n\geqslant k+20$.
We have $S_{1}(\lambda_1)\geqslant S_{1}(n-4)$.
Since $\lambda_1\leqslant n-1$, we have $S_{2}(\lambda_1)\geqslant S_{2}(n-1)$.
Since $k=s+t+6$ and $n\geqslant k-1$, we have
\begin{small}
\begin{equation*}
\begin{split}
S_{1}(\lambda_1)+S_{2}(\lambda_1)&\geqslant S_{1}(n-4)+S_{2}(n-1)\\
&=(k-5)n^4+(82-16k)n^3+(2k^2+56k-292)n^2+(448-162k)n+129k-147\\
&>(k^2-k-18)n^3+2(k^3+27k^2-255k+370)n+129k-147>0.
\end{split}
\end{equation*}
\end{small}
Hence $S_{1}(\lambda_1)+S_{2}(\lambda_1)>0$, $\varphi(\Gamma',\lambda_1)<0$. And then $\Gamma'$ has a larger index, which means that $\lambda_1((K_n,\theta_{1}(k-6,0)))<\lambda_1((K_n,\theta_{2}(0,k-5)))$.
\end{proof of 2.2}

	
\end{document}